\DeclareMathOperator{\tr}{tr}
\newcommand{\RN}[1]{\MakeUppercase{\romannumeral #1}}
\newtheorem{theorem}{Theorem}[section]
\newtheorem{lemma}[theorem]{Lemma}
\newtheorem{prop}[theorem]{Proposition}
\theoremstyle{definition}
\newtheorem{definition}[theorem]{Definition}
\theoremstyle{remark}
\newtheorem{remark}[theorem]{Remark}
\numberwithin{equation}{section}
\title{GENERALIZED CURVATURE FOR THE OPTIMAL TRANSPORT PROBLEM INDUCED BY A TONELLI LAGRANGIAN}
\date{}
\author{YUCHUAN YANG}
\affil{DEPARTMENT OF MATHEMATICS, UCLA, CALIFORNIA, USA \\ 
\textit{Email address: }\tt yangyuchuan@ucla.edu}
\begin{document}

\maketitle

\begin{abstract}
We propose a generalized curvature that is motivated by the optimal transport problem on $\mathbb{R}^d$ with cost induced by a Tonelli Lagrangian $L$. We show that non-negativity of the generalized curvature implies displacement convexity of the generalized entropy functional on the $L-$Wasserstein space along $C^2$ displacement interpolants.
\end{abstract}

\textit{Mathematics Subject Classification}: 35A15, 49Q22, 53A99

\section{Introduction}
Given a Riemannian manifold $(M,g)$, one may consider the optimal transport problem with cost given by squared Riemannian distance. This induces the \textit{2-Wasserstein distance} $W_2$ on $\mathcal{P}_2(M)$, the space of probability measures on $M$ with finite second moments (i.e. probability measures $\mu$ such that 
\begin{align*}
    \int_M d(x,x_0)^2 d\mu(x)<\infty
\end{align*}
for every $x_0\in M$). The metric space $(\mathcal{P}_2(M),W_2)$ is called the \textit{2-Wasserstein space}, and is known to be a geodesic space (\cite{oldandnew} Chapter 7).

In \cite{OTTO}, Otto proposed that $\mathcal{P}_2 (M)$ admits a formal Riemannian structure and developed a formal calculus on $\mathcal{P}_2 (M)$. This later became what is known as \textit{Otto calculus} \cite{oldandnew} and was made rigorous by Ambrosio-Gigli-Savar\'{e} \cite{AGS}. In particular, Otto calculus allows one to compute \textit{displacement Hessians} of functionals along geodesics in $\mathcal{P}_2 (M)$. This is useful for characterizing a displacement convex functional (i.e. convex along every geodesic) by the non-negativity of its displacement Hessian. In a seminal work by Otto and Villani \cite{OTTOVILLANI}, it was shown that the displacement convexity of the entropy functional is related to the Ricci curvature of $(M,g)$. Since then, the notion of displacement convexity has been useful in many other areas. For instance, it has inspired new heuristics and proofs of various functional inequalities \cite{agueh},\cite{Cordero-Erausquin03inequalitiesfor}. 

Further advances have been made towards understanding the relationships between the geometry of the underlying space and the induced geometry of $\mathcal{P}(M)$, the space of probability measures on $M$.
In his Ph.D. thesis \cite{Schachter2017AnEA}, Schachter studied the optimal transport problem on $\mathbb{R}^d$ with cost induced by a \textit{Tonelli Lagrangian}. The case $d=1$ was considered in \cite{benjamin}, and this work was later used in \cite{hydrodynamics} and \cite{mfg}.

In his work, Schachter developed an \textit{Eulerian calculus}, extending the Otto calculus. Among the other contributions of his thesis, Schachter derived a canonical form for the displacement Hessians of functionals. Using Eulerian calculus, he found a new class of displacement convex functionals on $S^1$ \cite{benjamin}, which includes those found by Carrillo and Slep\v{c}ev in \cite{carrilloslepcev}. In the case when the cost is given by squared Riemannian distance, Schachter proved that his displacement Hessian agrees with Villani's displacement Hessian in \cite{oldandnew}, which is a quadratic form involving the \textit{Bakry-Emery tensor}. 

\textbf{Summary of main results:} In this manuscript, a generalized notion of curvature $\mathcal{K}_x$ (Definition \ref{generalizedcurvature}) is proposed for the manifold $M=\mathbb{R}^d$ equipped with a general Tonelli Lagrangian $L$, and is given by
\begin{align*}
    \mathcal{K}_x (\xi) \coloneqq \tr \bigg(\nabla \xi(x)^2 + A(x,\xi(x))\nabla \xi(x) + B(x,\xi(x))\bigg)
\end{align*}
for vector fields $\xi\in C^2(\mathbb{R}^d;\mathbb{R}^d)$. The maps $A$ and $B$ are defined in Lemma \ref{jacobi}. We prove that this generalized curvature is independent of the choice of coordinates (Theorem \ref{coordinatefree}). In the case where $\xi$ take a special form (that naturally arises from the optimal transport problem), we provide an explicit formula for $\mathcal{K}_x$ in Theorem \ref{formula}. Lastly, we furnish an example of a Lagrangian cost with non-negative generalized curvature that is not given by squared Riemannian distance. This induces a geometry on the $L-$Wasserstein space where the generalized entropy functional \eqref{entropy} is displacement convex along suitable curves.

This paper is organized as follows: In the first four sections, we will review the optimal transport problem induced by a Tonelli Lagrangian, up to and including the notion of displacement convexity. The thesis of Schachter \cite{Schachter2017AnEA} provides a good overview of key definitions and results needed.
Section 2 covers some basic notation. Section 3 reviews some ideas from \cite{Schachter2017AnEA}; chief among them is the relationship between the various formulations of the optimal transport problem. Section 4 discusses functionals along curves in Wasserstein space, including a computation of the displacement Hessian. Section 5 introduces the definition and various properties of the generalized curvature $\mathcal{K}_x$. Lastly, Section 6 provides an example of a Lagrangian with everywhere non-negative generalized curvature.

\subsection*{Acknowledgements} This paper grew out of an undergraduate project I worked on at UCLA. I would like to thank Wilfrid Gangbo (who first introduced this subject to me) for his continued guidance and support. His expertise and generosity have been tremendously helpful. I also thank Tommaso Pacini for helpful feedback. Lastly, I would like to thank the reviewer for the suggestions to the manuscript.

\section{Notation}
We will take our underlying manifold to be $M = \mathbb{R}^d$ and identify its tangent bundle $T\mathbb{R}^d\cong\mathbb{R}^d \times\mathbb{R}^d$. Let $\mathcal{P}^{ac} = \mathcal{P}^{ac}(\mathbb{R}^d)$ denote the set of probability measures on $\mathbb{R}^d$ that are absolutely continuous with respect to the $d-$dimensional Lebesgue measure (denoted $\mathcal{L}^d$). An element of $\mathcal{P}^{ac}$ will often be identified by its density $\rho$. Given $\rho\in\mathcal{P}^{ac}$ and a measurable function $T:\mathbb{R}^d \to \mathbb{R}^d$, $T_{\#}\rho$ will denote the push-forward measure of $\rho$. 

\begin{definition}[Tonelli Lagrangian]
	A function $L:\mathbb{R}^d \times\mathbb{R}^d \to \mathbb{R}$ is called a \textit{Tonelli Lagrangian} if it satisfies the following conditions:
	\begin{enumerate}[label=(\roman*)]
		\item $L\in C^2 (\mathbb{R}^d \times\mathbb{R}^d)$.
		\item For every $x\in\mathbb{R}^d$, the function $L(x,\cdot):\mathbb{R}^d \to\mathbb{R}$ is strictly convex.
		\item $L$ has asymptotic superlinear growth in the variable $v$, in the sense that there exists a constant $c_0\in\mathbb{R}$ and a function $\theta:\mathbb{R}^d \to [0,+\infty)$ with
		\begin{align*}
			\lim_{|v|\to +\infty}\frac{\theta (v)}{|v|}=+\infty
		\end{align*}
		such that
		\begin{align}\label{superlinear}
			L(x,v)\geq c_0 + \theta (v)
		\end{align}
		for all $(x,v)\in\mathbb{R}^d \times\mathbb{R}^d$.
	\end{enumerate}
\end{definition}

Throughout this manuscript, $L \in C^k (\mathbb{R}^d \times\mathbb{R}^d)$, $k\geq 3$ will be assumed to be a Tonelli Lagrangian and we will work with the underlying space $(\mathbb{R}^d , L)$. We denote the gradient with respect to the $x$ (position) and $v$ (velocity) variables by $\nabla_x L, \nabla_v L\in\mathbb{R}^d$ respectively. Similarly, the second-order derivatives will be denoted by $\nabla_{xx}^2 L$, $\nabla_{vv}^2 L$, $\nabla_{xv}^2 L$, $\nabla_{vx}^2L = \nabla_{xv}^2 L^{\top} \in\mathbb{R}^{d\times d}$. We will assume that the Hessian $\nabla_{vv}^2 L(x,v)$ is positive-definite for every $(x,v)\in\mathbb{R}^d \times\mathbb{R}^d$. The time derivative of a function $f(t)$ will be denoted by $\dot{f} = \frac{df}{dt}$.

\section{Optimal transport problem induced by a Tonelli Lagrangian}
\subsection{Lagrangian optimal transport problem}\label{lagrangesubsection}
The goal of this section is to establish the different formulations of the optimal transport problem with cost induced by a Tonelli Lagrangian $L$. In this first subsection, the Lagrangian optimal transport problem will be presented. We will also briefly recall the classical Monge-Kantorovich theory. Most of the material in the subsection can be found in \cite{bernardbuffoni},  \cite{fatfig}, \cite{Schachter2017AnEA}  and \cite{oldandnew}. In subsection \ref{euleriansubsection} we will present an Eulerian perspective and its connections to viscosity solutions of the \textit{Hamilton-Jacobi equation}.

\begin{definition}[Action functional]
	Let $T>0$ and $\gamma\in W^{1,1}([0,T];\mathbb{R}^d)$ be a curve. The \textit{action of $L$ on $\gamma$} is
	\begin{align}\label{action}
		\mathcal{A}_{L,T}(\gamma) = \int_{0}^{T}L(\gamma(t),\dot{\gamma}(t))\;dt.
	\end{align}
	This induces a \textit{cost function} $c_{L,T}: \mathbb{R}^d \times\mathbb{R}^d \to\mathbb{R}$ given by 
	\begin{align}\label{costxy}
		c_{L,T}(x,y) = \inf\{ \mathcal{A}_{L,T}(\gamma)\; :\; \gamma\in W^{1,1}([0,T];\mathbb{R}^d), \gamma(0) = x, \gamma(T) = y\}.
	\end{align} 
	A curve $\gamma$ with $\gamma(0) = x, \gamma(T) = y$ is called an \textit{action-minimizing curve from $x$ to $y$} if $\mathcal{A}_{L,T}(\gamma) = c_{L,T}(x,y)$. 
\end{definition}

\begin{theorem}[\cite{fatfig} Appendix B]\label{actionmincurves}
	For any $x,y\in\mathbb{R}^d$, there exists an action-minimizing curve $\gamma$ from $x$ to $y$ such that
	\begin{enumerate}[label=(\roman*)]
		\item $\mathcal{A}_{L,T}(\gamma) = c_{L,T}(x,y)$
		\item $\gamma\in C^k ([0,T] ;\mathbb{R}^d)$
		\item $\gamma$ satisfies the Euler-Lagrange equation
		\begin{align}\label{eulerlagrange}
			\frac{d}{dt}((\nabla_v L)(\gamma,\dot{\gamma})) = (\nabla_x L)(\gamma,\dot{\gamma})
		\end{align}
	\end{enumerate}
\end{theorem}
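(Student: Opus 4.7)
The proof splits into three parts: existence of a minimizer by the direct method of the calculus of variations, derivation of the Euler--Lagrange equation via first variation, and a regularity bootstrap using the strict convexity of $L$ in $v$. All three pieces are fairly standard for Tonelli Lagrangians, but the bootstrap is where the technical heart lies.

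\textbf{Existence.} I would fix $x,y \in \mathbb{R}^d$ and take a minimizing sequence $(\gamma_n) \subset W^{1,1}([0,T];\mathbb{R}^d)$ with $\gamma_n(0)=x$, $\gamma_n(T)=y$ and $\mathcal{A}_{L,T}(\gamma_n) \to c_{L,T}(x,y)$. The superlinear growth hypothesis \eqref{superlinear} together with the de la Vall\'ee-Poussin criterion shows that $\{\dot{\gamma}_n\}$ is equi-integrable in $L^1([0,T];\mathbb{R}^d)$, because the energies $\int_0^T \theta(\dot\gamma_n)\,dt$ are uniformly bounded. Hence $(\dot\gamma_n)$ is weakly relatively compact in $L^1$ by the Dunford--Pettis theorem, and the boundary condition $\gamma_n(0)=x$ plus equi-integrability give uniform equicontinuity of $(\gamma_n)$ via $|\gamma_n(t)-\gamma_n(s)| \le \int_s^t |\dot\gamma_n|\,d\tau$. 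Arzel\`a--Ascoli then yields a subsequence with $\gamma_n \to \gamma$ uniformly and $\dot\gamma_n \rightharpoonup \dot\gamma$ weakly in $L^1$. The strict convexity of $L(x,\cdot)$ combined with continuity in $x$ invokes Tonelli's classical lower semicontinuity theorem, giving $\mathcal{A}_{L,T}(\gamma) \le \liminf_n \mathcal{A}_{L,T}(\gamma_n) = c_{L,T}(x,y)$. Since $\gamma$ is admissible, (i) follows.

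\textbf{Euler--Lagrange and regularity.} I would next derive the Euler--Lagrange equation in its Dubois--Reymond/integrated form: for any $\varphi \in C_c^\infty((0,T);\mathbb{R}^d)$, the first variation $\frac{d}{d\varepsilon}\big|_{\varepsilon=0} \mathcal{A}_{L,T}(\gamma + \varepsilon \varphi) = 0$ yields
\begin{align*}
\int_0^T \nabla_v L(\gamma,\dot\gamma)\cdot\dot\varphi + \nabla_x L(\gamma,\dot\gamma)\cdot\varphi \, dt = 0,
\end{align*}
so that, integrating by parts against a primitive, the momentum $p(t) \coloneqq \nabla_v L(\gamma(t),\dot\gamma(t))$ is absolutely continuous with $\dot p(t) = \nabla_x L(\gamma(t),\dot\gamma(t))$ a.e. The key bootstrap step now uses that $\nabla_{vv}^2 L(x,v)$ is positive-definite everywhere: the map $v \mapsto \nabla_v L(x,v)$ is a $C^{k-1}$-diffeomorphism onto its image for each fixed $x$, with $C^{k-1}$ inverse $V(x,p)$ by the implicit function theorem. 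Therefore $\dot\gamma(t) = V(\gamma(t), p(t))$ almost everywhere; since $\gamma$ and $p$ are continuous, this identifies $\dot\gamma$ a.e.\ with a continuous function, and after modification on a null set $\gamma \in C^1$. Feeding this back into $\dot p = \nabla_x L(\gamma,\dot\gamma)$ gives $p \in C^1$, hence $\dot\gamma = V(\gamma,p) \in C^1$, so $\gamma \in C^2$; iterating, each additional derivative of $L$ yields one additional derivative of $\gamma$, producing $\gamma \in C^k$ and establishing (ii). The classical Euler--Lagrange form \eqref{eulerlagrange} then follows by differentiating $p$ directly, giving (iii).

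\textbf{Main obstacle.} The existence step is soft and uses only the Tonelli hypotheses in a standard way, but the regularity bootstrap is the delicate point: one must first get absolute continuity of the momentum $p$ (rather than of $\dot\gamma$ itself, which a priori may only lie in $L^1$), and only then invert the Legendre map to upgrade $\dot\gamma$ from measurable to continuous. This is precisely the step that requires the positive-definiteness assumption on $\nabla_{vv}^2 L$, and it is where one appeals most directly to Appendix B of \cite{fatfig}.
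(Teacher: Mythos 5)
The paper itself does not prove this statement; it quotes it from \cite{fatfig}, Appendix B, so your sketch has to stand on its own against the classical Tonelli argument, which is indeed the route you follow. The existence part is fine (superlinearity plus de la Vall\'ee-Poussin/Dunford--Pettis, Arzel\`a--Ascoli, and lower semicontinuity for integrands convex in $v$ and bounded below). The genuine gap is the first-variation step. Under the paper's hypotheses there is \emph{no upper} growth control on $L$, $\nabla_x L$, $\nabla_v L$ in the velocity variable, and a minimizer is a priori only in $W^{1,1}$. Consequently neither the differentiation of $\varepsilon\mapsto\mathcal{A}_{L,T}(\gamma+\varepsilon\varphi)$ under the integral sign, nor even the integrability of $\nabla_v L(\gamma,\dot\gamma)\cdot\dot\varphi$ and $\nabla_x L(\gamma,\dot\gamma)\cdot\varphi$, is justified: for rapidly growing Lagrangians (say of exponential type in $v$) these quantities need not be dominated by $L(\gamma,\dot\gamma)$, and the monotone-convergence trick coming from convexity in $v$ only yields a one-sided, possibly infinite, directional derivative. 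So the weak (du Bois-Reymond) Euler--Lagrange identity with which you start the bootstrap is not yet available at that point of your argument.

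The missing idea is an a priori Lipschitz bound on the minimizer \emph{before} performing outer variations. For autonomous Lagrangians this is exactly what the classical theory supplies: inner variations (time reparametrizations $\gamma\circ\tau_\varepsilon$, which do not require any upper growth bounds) give the Erdmann/energy condition almost everywhere, and superlinearity then forces $\dot\gamma\in L^\infty$; this Lipschitz regularity of minimizers of autonomous superlinear functionals (Clarke--Vinter, Ambrosio--Ascenzi--Buttazzo) is precisely the technical heart of the theorem and is what the cited appendix must secure in the non-compact setting --- not, as your closing paragraph suggests, the absolute continuity of the momentum, which is routine once $(\gamma,\dot\gamma)$ ranges in a compact set and all integrands are bounded. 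With the Lipschitz bound in hand, the rest of your argument is correct: the first variation is then legitimately justified by dominated convergence, $p=\nabla_v L(\gamma,\dot\gamma)$ has an absolutely continuous representative with $\dot p=\nabla_x L(\gamma,\dot\gamma)$, the positive-definiteness of $\nabla^2_{vv}L$ lets you invert the Legendre map to get $\gamma\in C^1$, and the iteration upgrades this to $C^k$, yielding (ii) and (iii).
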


\begin{definition}[Lagrangian flow]\label{lagrangianflow}
	The Lagrangian flow $\Phi:[0,+\infty)\times\mathbb{R}^d\times\mathbb{R}^d \to \mathbb{R}^d\times\mathbb{R}^d$ is defined by
	\begin{align*}
		\begin{cases}
			\frac{d}{dt}((\nabla_v L)(\Phi)) = (\nabla_x L)(\Phi)\\
			\Phi(0,x,v)=(x,v)
		\end{cases}
	\end{align*}
\end{definition}

We refer the reader to \cite{fatfig} and \cite{Schachter2017AnEA} for further properties of the cost function $c_{L,T}$. In particular, it is locally Lipschitz and thus differentiable almost everywhere by Rademacher's theorem. Moreover, if either $\frac{\partial}{\partial x}c_{L,T}(x_0 ,y_0)$ or $\frac{\partial}{\partial y}c_{L,T}(x_0 ,y_0)$ exists at $(x_0,y_0)$, then the action-minimizing curve from $x_0$ to $y_0$ is unique. With the cost $c_{L,T}$, we may state the Monge problem and the Kantorovich problem.

\begin{definition}[Monge problem]
	Let $\rho_0,\rho_T\in\mathcal{P}^{ac}$. The Monge optimal transport problem from $\rho_0$ to $\rho_T$ for the cost $c_{L,T}$ is the minimization problem
	\begin{align}
		\inf_M \bigg\{\int_{\mathbb{R}^d}c_{L,T}(x,M(x))\rho_0(x) \;dx\;:\;M_{\#}\rho_0=\rho_T\;,\; M\text{ Borel measurable} \bigg\}.
	\end{align}
\end{definition}

\begin{definition}[Kantorovich problem]\label{kantorovichproblem}
	Let $\Pi (\rho_0,\rho_T)$ denote the set of all probability measures on $\mathbb{R}^d \times\mathbb{R}^d$ with marginals $\rho_0$ and $\rho_T$. Then the Kantorovich optimal transport problem from $\rho_0$ to $\rho_T$ for the cost $c_{L,T}$ is the minimization problem
	\begin{align}\label{kantorovichcost}
		\inf_\pi \bigg\{ \int_{\mathbb{R}^d \times\mathbb{R}^d}c_{L,T}(x,y)\;d\pi(x,y) \;:\;\pi\in\Pi (\rho_0,\rho_T) \bigg\}.
	\end{align}
	A minimizer $\pi$ is called an optimal transport plan. The infimum in \eqref{kantorovichcost} is denoted $W_{c_{L,T}}(\rho_0 , \rho_T)$ and it is called the \textit{Kantorovich cost from $\rho_0$ to $\rho_T$}.
\end{definition}

If $W_{c_{L,T}}(\rho_0 , \rho_T)$ is finite, then the Monge problem with cost $c_{L,T}$ admits an optimizer $M$ (called the Monge map) that is unique $\rho_0-$almost everywhere \cite{fatfig}. Note that the Monge problem is only concerned with the initial and final states (i.e. $\rho_0,\rho_T$). To interpolate between $\rho_0$ and $\rho_T$ in a way that respects the cost $c_{L,T}$, we consider the Lagrangian formulation of the optimal transport problem induced by $L$.

\begin{definition}[Lagrangian optimal transport problem]\label{lagrangian}
	Let $\rho_0,\rho_T\in\mathcal{P}^{ac}$. The Lagrangian optimal transport problem from $\rho_0$ to $\rho_T$ induced by the Tonelli Lagrangian $L$ is the minimization problem
	\begin{align}
		\inf_\sigma \bigg\{ \int_{0}^T \int_{\mathbb{R}^d}L(\sigma(t,x),\dot{\sigma}(t,x))\rho_0(x)\;dx\;dt \bigg\}
	\end{align}
	where the infimum is taken over all $\sigma:[0,T]\times\mathbb{R}^d \to\mathbb{R}^d$ such that
	\begin{enumerate}[label=(\roman*)]
		\item $\sigma(\cdot,x)\in W^{1,1}([0,T];\mathbb{R}^d)$ for every $x\in\mathbb{R}^d$
		\item $\sigma(t,\cdot)$ is Borel measurable for every $t\in [0,T]$
		\item $\sigma(0,x) = x$ for every $x\in\mathbb{R}^d$
		\item $\sigma(T,\cdot)_{\#}\rho_0 = \rho_T$
	\end{enumerate}
\end{definition}

In \cite{Schachter2017AnEA}, it is shown that if $W_{c_{L,T}}(\rho_0 , \rho_T)$ is finite , then the Lagrangian optimal transport problem admits an optimizer $\sigma$ such that $\sigma(\cdot,x)$ is an action-minimizing curve from $\sigma(0,x)=x$ to $\sigma(T,x)$ for every $x\in\mathbb{R}^d$. Moreover, the map $\sigma(T,\cdot)$ coincides with the Monge map $M$ and so is unique $\rho_0-$almost everywhere. With an optimizer $\sigma$, we can define the notion of \textit{displacement interpolation}, which is the analogue of a geodesic in $\mathcal{P}^{ac}$.

\begin{definition}[Displacement interpolant]
	Let $\rho_0,\rho_T\in\mathcal{P}^{ac}$ be such that the Kantorovich cost $W_{c_{L,T}}(\rho_0 , \rho_T)$ is finite. Let $\sigma$ be an optimizer of the Lagrangian optimal transport problem. Then the \textit{displacement interpolant} between $\rho_0$ and $\rho_T$ for the cost $c_{L,T}$ is the measure-valued map
	\begin{align*}
		[0,T]\ni t \mapsto \mu_t = \sigma(t,\cdot)_{\#}\rho_0.
	\end{align*}
\end{definition}

Since $\mu_t$ is absolutely continuous with respect to $\mathcal{L}^d$ for every $t\in [0,T]$ (\cite{fatfig} Theorem 5.1), we will also identify $\mu_t$ with its density $\rho_t$. Subsequently, we will always denote a displacement interpolant by a function $\rho: [0,T]\times\mathbb{R}^d \to \mathbb{R}$ and use the notation $\rho_t = \rho(t,\cdot)$ whenever the intention is clear. Since the maps $\sigma(t,\cdot)$ are uniquely defined ($\rho_0-$almost everywhere) on the support of $\rho_0$, the displacement interpolant is well-defined. Moreover, the map $\sigma\big|_{[0,t]\times\mathbb{R}^d}$ for an intermediary time $t\in[0,T]$ optimizes the Lagrangian optimal transport problem from $\rho_0$ to $\rho_t$, i.e.
\begin{align*}
	W_{c_{L,t}}(\rho_0,\rho_t) = \int_{0}^t \int_{\mathbb{R}^d}L(\sigma(s,x),\dot{\sigma}(s,x))\rho_0(x)\;dx\;ds.
\end{align*}

In order to discuss the Eulerian formulation of the optimal transport problem, we need to introduce the \textit{Kantorovich duality}. We do so in accordance with the convention of \cite{oldandnew}.

\begin{theorem}[Kantorovich duality]
	The Kantorovich optimal transport problem from $\rho_0$ to $\rho_T$ for the cost $c_{L,T}$ has a dual formulation
	\begin{align*}
		&\inf_\pi \bigg\{ \int_{\mathbb{R}^d \times\mathbb{R}^d}c_{L,T}(x,y)\;d\pi(x,y) \;:\;\pi\in\Pi (\rho_0,\rho_T) \bigg\}\\
		= &\sup_{(u_0,u_T)}\bigg\{ \int_{\mathbb{R}^d}u_T(y)\rho_T(y)\;dy  - \int_{\mathbb{R}^d}u_0(x)\rho_0(x)\;dx \;:\;(u_0,u_T)\in L^1 (\rho_0)\times L^1(\rho_T)\;,\\
		&\qquad\qquad u_T(y)-u_0(x)\leq c_{L,T}(x,y)\quad\forall (x,y)\in\mathbb{R}^d \times\mathbb{R}^d\bigg\}
	\end{align*}
	Moreover, we may assume that 
	\begin{align*}
		u_T(y) &= \inf_{x\in\mathbb{R}^d} \big\{ u_0(x) + c_{L,T}(x,y) \big\}\\
		u_0(x) &= \sup_{y\in\mathbb{R}^d} \big\{ u_T(y) - c_{L,T}(x,y) \big\}
	\end{align*}
	If $(u_0, u_T)$ is an optimizer of the dual problem, then $u_0$ and $u_T$ are called Kantorovich potentials.
\end{theorem}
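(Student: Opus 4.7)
The plan is to reduce the statement to the classical Kantorovich duality theorem (see e.g. \cite{oldandnew}, Theorem 5.10) by verifying the two hypotheses needed on the cost: lower semicontinuity and a lower bound by an integrable function. Lower semicontinuity --- in fact local Lipschitz continuity --- of $c_{L,T}$ is noted in the paragraph after Theorem \ref{actionmincurves}. The lower bound follows from the superlinearity assumption \eqref{superlinear}: for any $W^{1,1}$ curve $\gamma$ joining $x$ to $y$,
\begin{align*}
    \mathcal{A}_{L,T}(\gamma) \geq c_0 T + \int_0^T \theta(\dot\gamma(t))\,dt \geq c_0 T,
\end{align*}
so $c_{L,T}(x,y) \geq c_0 T$ uniformly, which is trivially integrable against the marginals $\rho_0, \rho_T$.

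With these hypotheses in place, I would obtain the duality identity via a Fenchel--Rockafellar argument on $C_b(\mathbb{R}^d) \times C_b(\mathbb{R}^d)$. Define the continuous linear functional $\Theta_1(u_0, u_T) = \int u_0\,d\rho_0 - \int u_T\,d\rho_T$ and the convex indicator $\Theta_2(u_0, u_T)$ that is $0$ when the admissibility constraint $u_T(y) - u_0(x) \leq c_{L,T}(x,y)$ holds pointwise and $+\infty$ otherwise. Both functionals are convex; $\Theta_1$ is continuous everywhere, and $\Theta_2$ is finite at the constant pair $(u_0, u_T) = (M, 0)$ for $M$ large enough, using the lower bound $c_{L,T} \geq c_0 T$. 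Computing the Legendre transform of $\Theta_1 + \Theta_2$ on the space of signed Radon measures identifies the dual with the primal Kantorovich problem from Definition \ref{kantorovichproblem}, yielding the equality of values. Extending the admissible pairs from $C_b$ to arbitrary $(u_0, u_T) \in L^1(\rho_0) \times L^1(\rho_T)$ is then routine, via truncation and mollification.

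For the $c$-conjugate structure, I would use the familiar pointwise improvement. Given any admissible $(u_0, u_T)$, the function $\tilde u_T(y) := \inf_x\{u_0(x) + c_{L,T}(x,y)\}$ pointwise dominates $u_T$ (by admissibility) and remains admissible, so replacing $u_T$ by $\tilde u_T$ weakly increases the objective. Next, $\tilde u_0(x) := \sup_y\{\tilde u_T(y) - c_{L,T}(x,y)\}$ satisfies $\tilde u_0 \leq u_0$ pointwise, and since $u_0$ appears with a negative sign in the objective, this substitution again cannot decrease it. The pair $(\tilde u_0, \tilde u_T)$ then satisfies the claimed $c$-conjugate identities tautologically.

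The step I expect to require the most care is verifying measurability and $L^1$-integrability of the $c$-conjugate potentials: measurability follows from continuity of $c_{L,T}$ by a standard exhaustion argument, computing the inf and sup over a countable dense subset, while the $L^1$ bounds use two-sided control coming from admissibility applied at any fixed reference point, together with the assumed finiteness of $W_{c_{L,T}}(\rho_0,\rho_T)$ to absorb growth at infinity. No Lagrangian-specific input is needed beyond the lower bound derived from \eqref{superlinear}; the theorem is essentially the general Kantorovich duality applied to the Tonelli cost.
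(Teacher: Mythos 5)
The paper does not prove this theorem; it is stated as a known fact, with the surrounding text explicitly following the conventions of \cite{oldandnew} (where it is Theorem~5.10, which Remark~\ref{potentials} cites directly). Your high-level plan --- verify that $c_{L,T}$ is lower semicontinuous and bounded below, then invoke general Kantorovich duality --- is exactly the intended justification, and your two verifications are correct: local Lipschitz continuity of $c_{L,T}$ is recorded after Theorem~\ref{actionmincurves}, and the uniform lower bound $c_{L,T}(x,y)\ge c_0 T$ follows cleanly from \eqref{superlinear}. The $c$-conjugate improvement argument you give (replace $u_T$ by $\inf_x\{u_0(x)+c_{L,T}(x,y)\}$, then $u_0$ by $\sup_y\{\tilde u_T(y)-c_{L,T}(x,y)\}$, and check each step is admissible and monotone for the objective) is also standard and correct.

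One caveat worth flagging in your Fenchel--Rockafellar elaboration: on the non-compact base space $\mathbb{R}^d$, the topological dual of $C_b(\mathbb{R}^d\times\mathbb{R}^d)$ is the space of bounded \emph{finitely additive} set functions, not the space of signed Radon measures, so the step ``Computing the Legendre transform \ldots identifies the dual with the primal Kantorovich problem'' is not automatic and requires ruling out the non-countably-additive part (e.g., via tightness of $\rho_0,\rho_T$, exhaustion by compact sets, or a different formulation). This is precisely why the proof of Theorem~5.10 in \cite{oldandnew} does not run Fenchel--Rockafellar naively on $C_b$ over a Polish space but instead proceeds via $c$-cyclical monotonicity. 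Since you ultimately fall back on citing that theorem anyway, this is a gap in the sketch rather than in the overall plan.
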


\begin{remark}\label{potentials}
	If the Monge optimal transport problem from $\rho_0$ to $\rho_T$ for the cost $c_{L,T}$ admits a minimizer $M$ (unique $\rho_0-$almost everywhere), then any optimal transport plan $\pi\in\Pi (\rho_0,\rho_T)$ is concentrated on the graph of $M$ \cite{fatfig}. Moreover, if $u_0$ and $u_T$ are Kantorovich potentials, then 
	\begin{align*}
		u_T(y) - u_0(x) \leq c_{L,T}(x,y)
	\end{align*}
	for every $(x,y)\in\mathbb{R}^d \times\mathbb{R}^d$ and we have equality 
	\begin{align*}
		u_T(M(x)) - u_0(x) = c_{L,T}(x,M(x))
	\end{align*}
	for $x$ $\rho_0-$almost everywhere (see \cite{oldandnew} Theorem 5.10).
\end{remark}

\subsection{Eulerian formulation}\label{euleriansubsection}

The paper by Benamou and Brenier \cite{Benamou2000ACF} is one of the earliest works establishing the Eulerian formulation and its connection to Hamilton-Jacobi equations. Subsequently, the relationships between the different formulations of the optimal transport problem were further studied (for instance, \cite{bernardbuffoni}).  

In particular, the Eulerian view establishes the displacement interpolant as a solution to the \textit{continuity equation}. First, we state some basic facts about the \textit{Hamiltonian}. 

The Hamiltonian associated with the Tonelli Lagrangian $L$ is defined as the \textit{Legendre transform} of $L$ with respect to the variable $v$, i.e.
\begin{align}
	H(x,p) = \sup_{v\in\mathbb{R}^d}\{\langle p,v \rangle - L(x,v)\}.
\end{align}
Thus, the Hamiltonian $H$ satisfies the \textit{Fenchel-Young inequality}
\begin{align}
	\langle v,p \rangle \leq H(x,p) + L(x,v)
\end{align}
for all $x,v,p\in\mathbb{R}^d$, with equality if and only if 
\begin{align}\label{youngequality}
	p = (\nabla_v L)(x,v).
\end{align}
Moreover, $H\in C^k (\mathbb{R}^d \times \mathbb{R}^d)$ and
\begin{align}\label{inverse}
	(\nabla_v L)(x,(\nabla_p H)(x,r))= (\nabla_p H)(x, (\nabla_v L)(x,r) ) = r.
\end{align}

Let $u_0:\mathbb{R}^d \to [-\infty,+\infty]$ be a function and $T>0$. 
We define the \textit{Lax-Oleinik evolution} $u:[0,T]\times\mathbb{R}^d\to [-\infty,+\infty]$ of $u_0$ by
\begin{align}
	u(t,x) &\coloneqq \inf_{\gamma} \bigg\{ u_0(\gamma(0)) + \int_{0}^{t}L(\gamma(\tau),\dot{\gamma}(\tau))\;d\tau\;:\;\gamma\in W^{1,1}([0,t];\mathbb{R}^d)\;,\;\gamma(t) = x \bigg\} \label{laxoleinik} \\
	&= \inf_{\gamma} \bigg\{ u_0(\gamma(0)) + \mathcal{A}_{L,t}(\gamma)\;:\;\gamma\in W^{1,1}([0,t];\mathbb{R}^d)\;,\;\gamma(t) = x \bigg\} \nonumber \\
	&=\inf_{y\in\mathbb{R}^d} \big\{ u_0(y) + c_{L,t}(y,x) \nonumber \big\}
\end{align}
so that $u(0,x) = u_0(x)$. 

\begin{remark}\label{finite}
	Since $L$ is bounded below, if there exists some $(t^*,x^*)\in (0,T]\times\mathbb{R}^d$ such that $u(t^*,x^*)$ is finite, then $u$ is finite on all of $[0,T]\times\mathbb{R}^d$.
\end{remark}
It is known that if $u$ is finite, then it is a viscosity solution of the Hamilton-Jacobi equation
\begin{align}\label{hje}
	\frac{\partial u}{\partial t} + H(x,\nabla u) = 0
\end{align}
(see \cite{fathi} Section 7.2 and \cite{fathinew} Theorem 1.1).

\begin{definition}[Calibrated curve]
	Let $f:[t_0,t_1]\times\mathbb{R}^d$ be a function. A curve $\gamma\in W^{1,1}([t_0,t_1];\mathbb{R}^d)$ is called a $(f,L)-$\textit{calibrated curve} if $f(t_0,\gamma(t_0))$, $f(t_1,\gamma(t_1))$ and $\int_{t_0}^{t_1}L(\gamma(t),\dot{\gamma}(t))\;dt$ are all finite and
	\begin{align}
		f(t_1,\gamma(t_1)) - f(t_0,\gamma(t_0)) = \int_{t_0}^{t_1}L(\gamma(t),\dot{\gamma}(t))\;dt.
	\end{align}
\end{definition}

In the following proposition, we mention some properties of $u$ that are of interest to us. The proofs can be found in \cite{cannarsa}, \cite{fathi} and \cite{fathinew}.

\begin{prop}\label{laxoleinikprop}
	Let $u$ be defined as in \eqref{laxoleinik}. If $u$ is finite, then the following hold:
	\begin{enumerate}[label=(\roman*)]
		\item $u$ is continuous and locally semi-concave on $(0,T)\times\mathbb{R}^d$.
		\item $u$ is a viscosity solution of the Hamilton-Jacobi equation
		\begin{align*}
			\frac{\partial u}{\partial t} + H(x,\nabla u) = 0.
		\end{align*}
		\item If $[a,b]\subset [0,T]$ and $\gamma:[a,b]\to\mathbb{R}^d$ is a $(u,L)-$calibrated curve, then $u$ is differentiable at $(t,\gamma(t))$ for every $t\in [a,b]$ and we have
		\begin{align}
			\nabla u(t,\gamma(t)) = (\nabla_v L)(\gamma(t),\dot{\gamma}(t)).
		\end{align}
		\item If $u$ is differentiable at $(t^*, x^*)$, then there is at most one $(u,L)-$calibrated curve $\gamma:[a,b]\to\mathbb{R}^d$ with $t^* \in [a,b]$ and $\gamma(t^*) = x^*$.
	\end{enumerate}
\end{prop}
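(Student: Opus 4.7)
The plan is to prove the four parts in order using the weak KAM toolkit developed in the cited references. For (i), continuity of $u$ follows from the representation $u(t,x)=\inf_y\{u_0(y)+c_{L,t}(y,x)\}$, joint continuity of $c_{L,t}$, and the fact that the superlinearity hypothesis~(\ref{superlinear}) confines minimizing sequences in $y$ to compact sets. To obtain local semi-concavity at $(t_0,x_0)\in(0,T)\times\mathbb{R}^d$, I would fix a $C^k$ action-minimizing curve $\gamma^*$ realizing $u(t_0,x_0)$ (Theorem~\ref{actionmincurves}) and, for $(t,x)$ nearby, build a smooth competitor by modifying $\gamma^*$ on a short terminal interval so that it arrives at $x$ at time $t$. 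Its action provides a $C^2$ pointwise bound $u(t,x)\leq u(t_0,x_0)+\phi(t,x)$ with $\phi(t_0,x_0)=0$, which is the definition of local semi-concavity.

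For (ii), both viscosity inequalities follow from the Lax-Oleinik semigroup. If $\phi$ touches $u$ from above at $(t^*,x^*)$, plugging the straight-line curve $\tau\mapsto x^*-(t^*-\tau)v$ into the semigroup inequality gives
\[
u(t^*,x^*)-u(t^*-h,x^*-hv)\leq hL(x^*,v)+o(h),
\]
from which $\phi(t^*,x^*)-\phi(t^*-h,x^*-hv)\leq hL(x^*,v)+o(h)$; dividing by $h$, letting $h\downarrow 0$, and taking $\sup_v$ yields the subsolution inequality $\partial_t\phi+H(x^*,\nabla\phi)\leq 0$. The supersolution inequality is analogous, using an action-minimizing curve terminating at $(t^*,x^*)$ (which exists by Theorem~\ref{actionmincurves}) to produce equality in the semigroup inequality locally.

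Part (iii) is the most delicate step and where I expect the main technical obstacle. Given a $(u,L)$-calibrated curve $\gamma\colon[a,b]\to\mathbb{R}^d$ and $t\in(a,b)$, calibration combined with the semigroup property forces both
\[
\psi^+(s,y)\coloneqq u(a,\gamma(a))+c_{L,s-a}(\gamma(a),y)\geq u(s,y)
\]
and
\[
\psi^-(s,y)\coloneqq u(b,\gamma(b))-c_{L,b-s}(y,\gamma(b))\leq u(s,y),
\]
both with equality at $(t,\gamma(t))$. Because the restrictions $\gamma|_{[a,t]}$ and $\gamma|_{[t,b]}$ are uniquely action-minimizing on their intervals, $c_{L,s-a}(\gamma(a),\cdot)$ and $c_{L,b-s}(\cdot,\gamma(b))$ are $C^1$ near $\gamma(t)$, so $\psi^\pm$ are $C^1$ there. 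The sandwich $\psi^-\leq u\leq\psi^+$ furnishes both a non-empty subgradient and supergradient of $u$ at $(t,\gamma(t))$; combined with the semi-concavity from (i), this forces differentiability, and the classical first-variation formula for the action identifies the gradient as $(\nabla_v L)(\gamma(t),\dot\gamma(t))$.

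Finally, (iv) is a short consequence of (iii): two $(u,L)$-calibrated curves $\gamma_1,\gamma_2$ through $(t^*,x^*)$ satisfy $(\nabla_v L)(x^*,\dot\gamma_i(t^*))=\nabla u(t^*,x^*)$, so strict convexity of $L(x^*,\cdot)$ (Section~2) forces $\dot\gamma_1(t^*)=\dot\gamma_2(t^*)$; since both solve the Euler-Lagrange equation (Theorem~\ref{actionmincurves}) with identical initial data at $t^*$, uniqueness for the Lagrangian flow (Definition~\ref{lagrangianflow}) forces $\gamma_1=\gamma_2$ on their common interval.
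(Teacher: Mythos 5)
The paper offers no proof of this proposition: it is quoted from the weak KAM literature and the proofs are deferred to \cite{cannarsa}, \cite{fathi} and \cite{fathinew}. Your sketch follows the same standard route those references take, but it has genuine gaps at exactly the points that make the citation necessary in the stated generality, where $u_0$ is an arbitrary $[-\infty,+\infty]$-valued function and only finiteness of $u$ is assumed. The recurring one is attainment: in (i) you ``fix a curve realizing $u(t_0,x_0)$'' and in the supersolution half of (ii) you use ``an action-minimizing curve terminating at $(t^*,x^*)$'', citing Theorem \ref{actionmincurves}. That theorem is fixed-endpoint Tonelli existence; what you actually need is that the infimum over the initial point $y$ in \eqref{laxoleinik} is attained, i.e.\ that a $(u,L)$-calibrated curve ends at the given point. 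With $u_0$ possibly discontinuous and unbounded below, superlinearity alone does not confine minimizing sequences in $y$ (the coercivity of $c_{L,t}(y,x)$ in $|x-y|$ can be eaten by decay of $u_0$); confinement has to be extracted from finiteness of $u$ at later times, and this is the main content of \cite{fathinew}. The same issue undercuts (i): an infimum of continuous functions is in general only upper semicontinuous, and local semiconcavity requires touching functions with \emph{locally uniform} $C^2$ bounds, not just one smooth upper bound per point; both demands again need compactness of near-minimizers and a priori bounds on their velocities. The subsolution half of (ii), which uses only the dynamic programming inequality along straight segments, is fine.

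In (iii), the assertion that $\gamma|_{[a,t]}$ and $\gamma|_{[t,b]}$ are the \emph{unique} minimizers (hence that $\psi^{\pm}$ are differentiable at the touching point) is stated without proof. It is true for interior $t$, by a concatenation argument: a competing minimizer glued to the other half of $\gamma$ would be a minimizer of the fixed-endpoint problem on $[a,b]$ with a corner at $t$, contradicting Tonelli regularity unless the velocities agree, after which uniqueness for the Euler-Lagrange ODE applies. But it is unavailable at $t=a$ and $t=b$, where one of your two touching functions degenerates, so your sandwich proves differentiability only for $t\in(a,b)$, while the statement claims it for every $t\in[a,b]$. The endpoint case is genuinely delicate: with $L=|v|^2/2$ and $u_0(y)=|y|$, the constant curve at the origin is $(u,L)$-calibrated on $[0,b]$, yet $u(0,\cdot)=u_0$ is not differentiable at $0$; so the endpoints must either be excluded, as in the interior-time statements of \cite{fathi} and \cite{cannarsa}, or treated by a separate one-sided argument. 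Two smaller points: you only need differentiability of $\psi^{\pm}$ at the single touching point, not $C^1$ regularity of the costs near $\gamma(t)$; and once both a sub- and a superdifferential are nonempty, differentiability follows for any function, so semiconcavity is not needed in that step. Part (iv) is essentially right, provided the gradient identity at the point of differentiability is justified by a first-variation computation (which avoids the uniqueness-of-minimizer question) before invoking strict convexity of $L(x^*,\cdot)$ and uniqueness for the Euler-Lagrange flow.
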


We now return to the optimal transport problem from $\rho_0\in\mathcal{P}^{ac}$ to $\rho_T\in\mathcal{P}^{ac}$ induced by $L$. Suppose that $W_{c_{L,T}}(\rho_0,\rho_T)$ is finite and let $u_0\in L^1(\rho_0)$ be a Kantorovich potential.

\begin{prop}
	Let $\sigma : [0,T]\times\mathbb{R}^d \to \mathbb{R}^d$ be an optimizer of the Lagrangian optimal transport problem from $\rho_0$ to $\rho_T$ induced by $L$. Let $(u_0,u_T)$ be the corresponding Kantorovich potentials and $u:[0,T]\times\mathbb{R}^d \to \mathbb{R}$ be the Lax-Oleinik evolution of $u_0$. Then $(\nabla u)(t,\sigma(t,x))$ exists for all $t\in [0,T]$ and $x$ $\rho_0-$almost everywhere. In addition, $\sigma$ satisfies the relation
	\begin{align}\label{characteristic}
		\dot{\sigma}(t,x) = (\nabla_p H)(\sigma(t,x), (\nabla u)(t,\sigma(t,x))).
	\end{align}
\end{prop}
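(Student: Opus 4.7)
The plan is to show that, for $\rho_0$-almost every $x$, the curve $\gamma_x(t) \coloneqq \sigma(t,x)$ is $(u,L)$-calibrated on every closed subinterval of $[0,T]$, and then invoke Proposition \ref{laxoleinikprop}(iii) together with the Legendre inversion \eqref{inverse} to obtain both the existence of $\nabla u$ along $\gamma_x$ and the characteristic equation \eqref{characteristic}.

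First I would check the identity $u(T,\cdot)=u_T$: comparing the last expression in \eqref{laxoleinik} at time $T$ with the infimal formula $u_T(y)=\inf_{x\in\mathbb{R}^d}\{u_0(x)+c_{L,T}(x,y)\}$ from the Kantorovich duality theorem shows these coincide. Next, since $\sigma$ optimizes the Lagrangian problem, for $\rho_0$-a.e. $x$ the curve $\gamma_x$ is action-minimizing from $x$ to $M(x)\coloneqq\sigma(T,x)$, so $\mathcal{A}_{L,T}(\gamma_x)=c_{L,T}(x,M(x))$. Combined with Remark \ref{potentials}, which asserts $u_T(M(x))-u_0(x)=c_{L,T}(x,M(x))$ for $\rho_0$-a.e. $x$, this yields
\begin{equation*}
    u(T,\gamma_x(T)) - u(0,\gamma_x(0)) = \mathcal{A}_{L,T}(\gamma_x),
\end{equation*}
so $\gamma_x$ is $(u,L)$-calibrated on $[0,T]$.

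To upgrade calibration to an arbitrary subinterval $[a,b]\subset [0,T]$, I would invoke the dynamic-programming inequality $u(t,z)\leq u(s,y)+c_{L,t-s}(y,z)$, valid for $0\leq s<t\leq T$, which is immediate from \eqref{laxoleinik} by concatenation of curves. Chaining this across $[0,a]$, $[a,b]$, $[b,T]$ at the points $\gamma_x(0),\gamma_x(a),\gamma_x(b),\gamma_x(T)$, and further bounding each $c_{L,\cdot}$ term by the action of $\gamma_x$ on the corresponding segment, gives
\begin{equation*}
    u(T,\gamma_x(T)) \leq u(0,\gamma_x(0)) + \mathcal{A}_{L,T}(\gamma_x).
\end{equation*}
Since equality already holds in this chain by the previous step, each intermediate inequality must be an equality, so $\gamma_x$ is $(u,L)$-calibrated on $[a,b]$.

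With calibration on every such $[a,b]$ in hand, Proposition \ref{laxoleinikprop}(iii) produces $\nabla u(t,\gamma_x(t))$ for all $t\in(0,T)$ together with the identity $\nabla u(t,\sigma(t,x))=(\nabla_v L)(\sigma(t,x),\dot\sigma(t,x))$. Applying $(\nabla_p H)(\sigma(t,x),\cdot)$ to both sides and using \eqref{inverse} produces exactly \eqref{characteristic}. The principal obstacle I anticipate is the boundary: Proposition \ref{laxoleinikprop} only guarantees semi-concavity, and hence a.e. differentiability, on the open strip $(0,T)\times\mathbb{R}^d$, so existence of $\nabla u_0(x)$ and of $\nabla u_T(M(x))$ at the endpoints $t=0,T$ must be argued separately. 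This follows from local Lipschitzness of Kantorovich potentials (Rademacher's theorem) together with the identity $M_{\#}\rho_0=\rho_T$, which transports the $\rho_T$-null set of non-differentiability points of $u_T$ back to a $\rho_0$-null set in the domain.
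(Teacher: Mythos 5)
Your argument is correct and is essentially the paper's proof: establish $u(T,\cdot)=u_T$, combine Remark \ref{potentials} with action-minimality of $\sigma(\cdot,x)$ to show $t\mapsto\sigma(t,x)$ is $(u,L)$-calibrated for $\rho_0$-a.e.\ $x$, then apply Proposition \ref{laxoleinikprop}(iii) and the Legendre inversion \eqref{inverse}. The only deviation is your detour at the endpoints: it is not needed, since Proposition \ref{laxoleinikprop}(iii) as stated applies to calibrated curves on closed intervals $[a,b]\subset[0,T]$, so taking $[a,b]=[0,T]$ already gives differentiability of $u$ and the identity $\nabla u(t,\sigma(t,x))=(\nabla_v L)(\sigma(t,x),\dot\sigma(t,x))$ at $t=0,T$ as well; note that your Rademacher/push-forward patch, as written, only yields \emph{existence} of $\nabla u_0(x)$ and $\nabla u_T(M(x))$, not the relation \eqref{characteristic} at the endpoints, so if you insist on treating the boundary separately you would still need a first-variation argument there.
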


\begin{proof}
	By Remark \ref{finite}, $u$ is finite since $u(T,\cdot) = u_T \in L^1(\rho_T)$. By Remark \ref{potentials}, the Kantorovich potentials $(u_0,u_T)$ satisfy 
	\begin{align*}
		u_T(\sigma(T,x)) - u_0(x) &= c_{L,T}(x,\sigma(T,x))\\
		\iff u(T,\sigma(T,x)) - u(0,\sigma(0,x)) &= c_{L,T}(x,\sigma(T,x))
	\end{align*}
	for $x$ $\rho_0-$almost everywhere (recall that $\sigma(T,\cdot)$ coincides with the Monge map). Thus, for $\rho_0-$almost every $x$, the curve $t\mapsto\sigma(t,x)$ is a $(u,L)-$calibrated curve and so $(\nabla u)(t,\sigma(t,x)) = (\nabla_v L)(\sigma(t,x),\dot{\sigma}(t,x))$ exists by Proposition \ref{laxoleinikprop}. Using identity \eqref{inverse}, we get
	\begin{align*}
		\dot{\sigma}(t,x) = (\nabla_p H)(\sigma(t,x), (\nabla u)(t,\sigma(t,x))).
	\end{align*}
\end{proof}

\begin{remark}\label{velocity}
	Let $V:[0,T]\times\mathbb{R}^d \to \mathbb{R}^d$ be a time-dependent vector field that agrees with $(\nabla_p H)(x,(\nabla u)(t,x))$ on the set 
	\begin{align*}
		S_t \coloneqq \{ \sigma(t,y)\in\mathbb{R}^d\;:\; y\in\text{supp}(\rho_0)\;,\;(\nabla u)(t,\sigma(t,y))\;\text{exists} \}
	\end{align*}
	for each $t\in[0,T]$. Using the definition of the displacement interpolant $\rho_t = \sigma(t,\cdot)_{\#}\rho_0$, and the fact that $(\nabla u)(t,\sigma(t,x))$ exists for all $t\in [0,T]$ and $\rho_0-$almost every $x\in\mathbb{R}^d$, we have that the set
	\begin{align*}
		\{ \sigma(t,y)\in\mathbb{R}^d\;:\; y\in\text{supp}(\rho_0)\;,\; u\;\text{not differentiable at }(t,\sigma(t,y))\}
	\end{align*}
	is a set of zero $\rho_t-$measure. Thus, $S_t$ has full $\rho_t-$measure and so $V(t,x) = (\nabla_p H)(x,(\nabla u)(t,x))$ $\rho_t-$almost everywhere. By \eqref{characteristic}, $\dot{\sigma}(t,x) = V(t,\sigma(t,x))$ for all $t\in [0,T]$ and $\rho_0-$almost every $x\in\mathbb{R}^d$. This means that $\rho_t$ and $V$ solve the continuity equation
	\begin{align}
		\frac{\partial \rho_t}{\partial t} + \nabla\cdot (\rho_t V) = 0
	\end{align}
	in the sense of distributions (\cite{Schachter2017AnEA} Proposition 3.4.3).
\end{remark}

\section{Generalized entropy functional and displacement Hessian}
Otto calculus and Schachter's Eulerian calculus both allow for explicit computations, assuming that all relevant quantities possess sufficient regularity. However, the regularity of a displacement interpolant $\rho$ depends on the Lagrangian $L$, the initial and final densities $(\rho_0, \rho_T)$, and the optimal trajectories $\sigma$ (or the velocity field $V$ in the Eulerian framework). In general, the Kantorovich potential $u_0$ arising from an optimal transport problem induced by a Tonelli Lagrangian $L$ is only known to be semiconcave, differentiable $\mathcal{L}^d -$almost everywhere, and its gradient $\nabla u_0$ is only locally bounded (see \cite{figgig} and \cite{geo} Appendix C). This implies that the initial velocity $V(0,x) = (\nabla_p H)(x,\nabla u_0 (x))$ is only locally bounded. As such, even if the initial density $\rho_0$ is smooth, its regularity may fail to propagate along the displacement interpolant.

For our purpose of computing displacement Hessians, we require displacement interpolants to be of class $C^2$. Fortunately, such displacement interpolants do exist and we can construct them if we impose two additional criteria on $L$.

\subsection{$C^2$ displacement interpolants}

Let $L\in C^{k+1} (\mathbb{R}^d \times\mathbb{R}^d)$, $k\geq 3$ be a Tonelli Lagrangian satisfying two additional criteria (see \cite{cannarsa} Chapters 6.3, 6.4).
\begin{enumerate}[label=(L\arabic*)]
	\item There exists $\tilde{c}_0\geq 0$ and $\tilde{\theta}:[0,\infty)\to [0,\infty)$ with 
	\begin{align*}
		\lim_{r\to +\infty}\frac{\tilde{\theta}(r)}{r} = +\infty
	\end{align*}
	such that
	\begin{align*}
		L(x,v)\geq \tilde{\theta}(|v|)-\tilde{c}_0.
	\end{align*}
	In addition, $\tilde{\theta}$ is such that for any $M>0$ there exists $K_M >0$ with
	\begin{align*}
		\tilde{\theta}(r+m)\leq K_M[1+\tilde{\theta}(r)]
	\end{align*}
	for all $m\in [0,M]$ and all $r\geq 0$.
	\item For any $r>0$, there exists $C_r > 0$ such that
	\begin{align*}
		|(\nabla_x L)(x,v)| + |(\nabla_v L)(x,v)| < C_r \tilde{\theta}(|v|)
	\end{align*}
	for all $|x|\leq r$, $v\in\mathbb{R}^d$.
\end{enumerate}

Some common examples of Tonelli Lagrangians satisfying these criteria include the \textit{Riemannian kinetic energy}
\begin{align*}
    L(x,v) = \frac{1}{2}g_x(v,v)
\end{align*}
where $g_x$ denotes the underlying Riemannian metric tensor, and Lagrangians that arise from mechanics
\begin{align*}
    L(x,v) = \frac{1}{2}g_x(v,v) + U(x)
\end{align*}
for some appropriate potential $U:\mathbb{R}^d\to\mathbb{R}$.

Let $H$ be the corresponding Hamiltonian.

\begin{lemma}\label{smoothflow}
	Let $u_0\in C^{k+1}(\mathbb{R}^d)$ with $u_0(x)\geq -\tilde{c}_0$ for all $x\in\mathbb{R}^d$. Let $u:[0,+\infty)\times\mathbb{R}^d \to [-\infty,+\infty]$ be the Lax-Oleinik evolution of $u_0$, as defined in \eqref{laxoleinik}. For $x\in\mathbb{R}^d$, consider the Lagrangian flow (introduced in Definition \ref{lagrangianflow})
	\begin{align*}
		\Phi(t,x,V(0,x)) = (\Phi_1(t,x,V(0,x)), \Phi_2(t,x,V(0,x)))\;,\quad t\in [0,+\infty)
	\end{align*}
	where $V:[0,+\infty)\times\mathbb{R}^d \to \mathbb{R}^d$ is a time-dependent vector field defined by 
	\begin{align*}
		V(t,x) = (\nabla_p H)(x,(\nabla u)(t,x)).
	\end{align*}
    (Here, $\Phi_1$ and $\Phi_2$ are the $x$ and $v$ components of $\Phi$ respectively.)
	If we let $\sigma(t,x) = \Phi_1(t,x,V(0,x))$, then $\dot{\sigma}(t,x) = V(t,\sigma(t,x))$ for all $t\in [0,+\infty)$, $x\in\mathbb{R}^d$. Moreover, $\sigma(t,\cdot):\mathbb{R}^d \to \mathbb{R}^d$ is a $C^{k}-$diffeomorphism for every $t\in [0,+\infty)$.
\end{lemma}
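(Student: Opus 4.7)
The strategy is to combine the classical method of characteristics for the Hamilton-Jacobi equation with the regularity theory developed in Chapters 6.3 and 6.4 of \cite{cannarsa}, which is tailored to Tonelli Lagrangians satisfying (L1), (L2). The key observation is that $\sigma$ is nothing but the spatial projection of the Hamiltonian characteristic curves starting at $(x, \nabla u_0(x))$ in phase space, since $\nabla_v L(x, V(0,x)) = \nabla u_0(x)$ by the inversion identity \eqref{inverse}.

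First I would observe that the growth conditions (L1), (L2) guarantee the Lagrangian flow $\Phi$ is complete and $C^k$-smooth jointly in $(t,x,v)$; since $u_0 \in C^{k+1}$, the map $V(0,\cdot) = \nabla_p H(\cdot, \nabla u_0(\cdot))$ is $C^k$, and hence $\sigma(t,x) = \Phi_1(t,x,V(0,x))$ is $C^k$ in $(t,x)$ with $\dot\sigma(t,x) = \Phi_2(t,x,V(0,x))$ by definition. To identify this velocity with $V(t,\sigma(t,x))$, I would show that the trajectory $s \mapsto \sigma(s,x)$ is $(u,L)$-calibrated on $[0,t]$, i.e.\ that it realizes the infimum in the Lax-Oleinik formula for $u(t, \sigma(t,x))$ and satisfies $c_{L,t}(x,\sigma(t,x)) = \int_0^t L(\sigma,\dot\sigma)\,ds$. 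Once this is established, Proposition \ref{laxoleinikprop}(iii) gives $\nabla u(t, \sigma(t,x)) = \nabla_v L(\sigma(t,x),\dot\sigma(t,x))$, and then \eqref{inverse} yields
\begin{align*}
    \dot\sigma(t,x) = \nabla_p H(\sigma(t,x), \nabla u(t, \sigma(t,x))) = V(t, \sigma(t,x)).
\end{align*}

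For the $C^k$-diffeomorphism claim I would invoke Hadamard's global inverse function theorem. Locally, the differential $D_x \sigma(t,x)$ satisfies a linear Jacobi-type ODE coming from the linearization of the Euler-Lagrange equation along the trajectory, with $D_x\sigma(0,x) = I$; positive-definiteness of $\nabla_{vv}^2 L$ together with smoothness of $\nabla u$ along the trajectory then prevents $\det D_x\sigma(t,x)$ from vanishing, giving a local $C^k$ diffeomorphism. Properness of $\sigma(t,\cdot)$ follows from the superlinear growth of $L$ and the resulting control on characteristics starting from compact sets. Hadamard's theorem then upgrades local injectivity plus properness to a global $C^k$ diffeomorphism of $\mathbb{R}^d$.

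The hardest point is sustaining the calibration property and the no-conjugate-points property for all $t \in [0,\infty)$, not merely for short times. For small $t$, the classical method of characteristics is immediate: one directly constructs a $C^{k+1}$ solution $\tilde u(t,\sigma(t,x)) = u_0(x) + \int_0^t L(\sigma,\dot\sigma)\,ds$ of the Hamilton-Jacobi equation, which by uniqueness of viscosity solutions must coincide with $u$ on its domain of definition. Propagating this picture globally---so that shocks and caustics are excluded---is where the structural assumptions come in: the boundedness $u_0 \geq -\tilde{c}_0$ combined with (L1) keeps $u$ finite via Remark \ref{finite}, and the growth conditions together with the semiconcavity machinery of \cite{cannarsa} Chapters 6.3--6.4 provide the required control on the trajectories. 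Unpacking and applying this machinery is the technical core of the argument.
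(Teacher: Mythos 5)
Your outline correctly identifies the two algebraic ingredients (calibration of the flow curves, then Proposition \ref{laxoleinikprop}(iii) together with \eqref{inverse} to get $\dot\sigma = V(t,\sigma)$), but it does not actually prove the one statement on which everything hinges: that each \emph{forward} characteristic $s\mapsto\sigma(s,x)$ stays $(u,L)$-calibrated, and that characteristics never cross, for \emph{all} $t\in[0,+\infty)$ rather than for short time. You name this explicitly as ``the technical core'' and then defer it to ``unpacking the machinery of \cite{cannarsa} Chapters 6.3--6.4,'' which is precisely the step a proof must supply; as written, the proposal proves the lemma only on a short time interval (via classical characteristics plus uniqueness of viscosity solutions) and leaves the global claim as an assertion. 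Moreover, the mechanism you offer for non-degeneracy of $D_x\sigma$ --- ``positive-definiteness of $\nabla^2_{vv}L$ together with smoothness of $\nabla u$ along the trajectory prevents $\det D_x\sigma(t,x)$ from vanishing'' --- does not work: positive-definiteness of $\nabla^2_{vv}L$ does not exclude caustics (already for $L=|v|^2/2$ the map $y\mapsto y+t\nabla u_0(y)$ can degenerate), and $\nabla u$ is automatically smooth in $t$ along any single calibrated curve, which says nothing about neighboring characteristics focusing. What actually rules out caustics is global spatial regularity of $u$ (equivalently, absence of shocks), i.e.\ exactly the statement you postponed, so this part of the argument is circular.

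For comparison, the paper avoids any forward-in-time propagation or Hadamard-type argument. It first notes $u$ is finite, then cites \cite{fathinew} for the \emph{backward} statement: for every $(t,x)$ there is a unique $(u,L)$-calibrated curve $\gamma_x$ ending at $x$ at time $t$, along which $\nabla u$ exists and $\dot\gamma_x(s)=V(s,\gamma_x(s))$. Since such a curve is an action minimizer, it solves the Euler--Lagrange equation and is determined by its initial position and velocity $V(0,\gamma_x(0))$, hence coincides with a trajectory of the Lagrangian flow; this yields both $\dot\sigma(t,x)=V(t,\sigma(t,x))$ and bijectivity of $\sigma(t,\cdot)$ in one stroke. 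The $C^k$-diffeomorphism property is then obtained by quoting the regularity result $u\in C^{k+1}([0,+\infty)\times\mathbb{R}^d)$ from \cite{cannarsa} (under (L1)--(L2) and $u_0\in C^{k+1}$, $u_0\ge-\tilde c_0$), so that $V(t,\cdot)\in C^k$ and $\sigma(t,\cdot)$ is the flow map of a $C^k$ vector field. If you want to salvage your route, you must replace the deferred step by these (or equivalent) citations: the uniqueness-of-calibrated-curves/differentiability result of \cite{fathinew} and the global $C^{k+1}$ regularity of the Lax--Oleinik evolution, rather than deriving no-crossing from $\nabla^2_{vv}L>0$.
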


\begin{proof}
	Since $L$ and $u_0$ are both bounded below, we have
	\begin{align*}
		u(t,x) &= \inf_{\gamma} \bigg\{ u_0(\gamma(0)) + \int_{0}^{t}L(\gamma(\tau),\dot{\gamma}(\tau))\;d\tau\;,\;\gamma(t)=x\bigg\}\\
		&\geq  -\tilde{c}_0  - \tilde{c}_0 t\\
		&> -\infty
	\end{align*}
	and so $u$ is finite. From \cite{fathinew}, $u$ is a continuous viscosity solution of the Hamilton-Jacobi equation \eqref{hje} and we know that for each $(t,x)\in (0,+\infty)\times\mathbb{R}^d$, there exists a unique $(u,L)-$calibrated curve $\gamma_x : [0,t]\to\mathbb{R}^d$ such that $\gamma_x(t) = x$. Moreover, $(\nabla u)(s,\gamma_x (s))$ exists for all $s\in [0,t]$ and is given by 
	\begin{align*}
		(\nabla u)(s,\gamma_x (s)) &= (\nabla_v L)(\gamma_x (s),\dot{\gamma}_x (s)) \\
		\iff \dot{\gamma}_x (s) &= (\nabla_p H)(\gamma_x (s) , (\nabla u) (s,\gamma_x (s)))\\
		\iff \dot{\gamma}_x (s) &= V(s,\gamma_x (s))
	\end{align*}
	Since each $\gamma_x$ is necessarily an action-minimizing curve from $\gamma_x (0)$ to $\gamma_x (t) = x$, it is the unique solution curve to the Euler-Lagrange system
	\begin{align*}
		\begin{cases}
			\frac{d}{dt}((\nabla_v L)(\gamma,\dot{\gamma})) = (\nabla_x L)(\gamma,\dot{\gamma})\\
			\gamma (0) = \gamma_x (0)\\
			\dot{\gamma}(0) = \dot{\gamma}_x (0)
		\end{cases}
	\end{align*}
	Therefore, $\sigma(t,\cdot):\mathbb{R}^d \to \mathbb{R}^d$ is a bijection for all $t\in [0,+\infty)$ and $\dot{\sigma}(t,x) = V(t,\sigma(t,x))$ for all $(t,x)\in [0,+\infty)\times\mathbb{R}^d$. 
	
	Lastly, since $L\in C^{k+1} (\mathbb{R}^d \times\mathbb{R}^d)$ and $u_0\in C^{k+1} (\mathbb{R}^d)$, we have that $u\in C^{k+1} ([0,+\infty)\times\mathbb{R}^d)$ \cite{cannarsa}. As $\nabla_p H \in C^{k}(\mathbb{R}^d \times\mathbb{R}^d ; \mathbb{R}^d)$, we have $V(t,\cdot)\in C^{k}(\mathbb{R}^d ; \mathbb{R}^d)$ and so $\sigma(t,\cdot):\mathbb{R}^d \to \mathbb{R}^d$ is a $C^{k}-$diffeomorphism for every $t\in [0,+\infty)$.
\end{proof}

\begin{prop}
	Let $\rho_0\in\mathcal{P}^{ac} \cap C_{c}^2 (\mathbb{R}^d)$ be a compactly supported density. Then for any $T>0$, there exists a $C^2$ displacement interpolant $\rho: [0,T]\times\mathbb{R}^d \to \mathbb{R}$ with $\rho(0,\cdot) = \rho_0$.
\end{prop}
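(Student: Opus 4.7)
The strategy is to construct the displacement interpolant as the pushforward of $\rho_0$ along a smooth Lagrangian flow generated by a conveniently chosen initial potential, then verify that the resulting curve of measures is genuinely optimal via Kantorovich duality.

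First I would invoke Lemma \ref{smoothflow} with a trivial choice of $u_0 \in C^{k+1}(\mathbb{R}^d)$; for instance $u_0 \equiv 0$, which satisfies $u_0 \geq -\tilde{c}_0$ since $\tilde{c}_0 \geq 0$. This produces a Lax-Oleinik evolution $u \in C^{k+1}([0,T]\times\mathbb{R}^d)$, a velocity field $V(t,x) = (\nabla_p H)(x,(\nabla u)(t,x))$, and a map $\sigma : [0,T]\times\mathbb{R}^d \to \mathbb{R}^d$ with $\sigma(0,x)=x$, $\dot\sigma(t,x) = V(t,\sigma(t,x))$, and $\sigma(t,\cdot)$ a $C^k$-diffeomorphism for every $t$. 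I would then define the candidate interpolant by $\rho_t := \sigma(t,\cdot)_{\#}\rho_0$ and declare $\rho_T := \rho(T,\cdot)$.

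For regularity and support, note that the spacetime map $F:(t,x)\mapsto(t,\sigma(t,x))$ is $C^k$; since $\sigma(t,\cdot)$ is a bijection for each $t$, $F$ is a bijection of $[0,T]\times\mathbb{R}^d$ onto itself, and its Jacobian determinant $\det D_x\sigma(t,x)$ never vanishes, so the inverse function theorem gives $F^{-1} \in C^k$. Writing $S(t,y) = \sigma(t,\cdot)^{-1}(y)$, the change-of-variables formula
\begin{align*}
\rho(t,y) = \frac{\rho_0(S(t,y))}{|\det D_x \sigma(t, S(t,y))|}
\end{align*}
exhibits $\rho$ as a $C^{\min(k,2)} = C^2$ function jointly in $(t,y)$, since $\rho_0 \in C^2$ and $k\geq 3$. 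Compactness of $\mathrm{supp}(\rho(t,\cdot)) = \sigma(t,\mathrm{supp}(\rho_0))$ follows from continuity of $\sigma(t,\cdot)$.

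The most substantive step is to verify that $\sigma$ optimizes the Lagrangian optimal transport problem from $\rho_0$ to $\rho_T$, so that $\rho_t$ qualifies as a displacement interpolant in the sense of the paper. From the proof of Lemma \ref{smoothflow}, each trajectory $t\mapsto\sigma(t,x)$ is an action-minimizing $(u,L)$-calibrated curve, so
\begin{align*}
u(T,\sigma(T,x)) - u_0(x) = \int_0^T L(\sigma(t,x),\dot\sigma(t,x))\,dt.
\end{align*}
Integrating against $\rho_0$ and using $\sigma(T,\cdot)_{\#}\rho_0 = \rho_T$, the total action of $\sigma$ equals $\int u(T,\cdot)\,d\rho_T - \int u_0\,d\rho_0$. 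On the other hand, the Lax-Oleinik definition \eqref{laxoleinik} yields the pointwise bound $u(T,y) - u_0(x) \leq c_{L,T}(x,y)$ for all $(x,y)$, so any transport plan from $\rho_0$ to $\rho_T$ has Kantorovich cost at least this same quantity. The two bounds match, so $\sigma$ attains the infimum in Definition \ref{lagrangian} and $\rho_t$ is a displacement interpolant. I expect this last duality argument, rather than the regularity bookkeeping, to be the main obstacle, since it requires identifying $(u_0, u(T,\cdot))$ as admissible (though not necessarily optimal) dual competitors and checking that optimality of the primal follows from equality of the actions.
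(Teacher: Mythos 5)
Your proposal is correct and follows essentially the same route as the paper: invoke Lemma \ref{smoothflow} (the paper takes an arbitrary admissible $u_0\in C^{k+1}$ with $u_0\geq-\tilde{c}_0$ rather than $u_0\equiv 0$, which changes nothing), define $\rho_t=\sigma(t,\cdot)_{\#}\rho_0$, prove optimality from the calibrated-curve equality together with the Lax--Oleinik inequality, and get $C^2$ regularity by the change-of-variables formula. The only cosmetic difference is in the optimality step: the paper bounds an arbitrary Lagrangian competitor $\phi$ curve-wise via $u(T,\phi(T,x))-u(0,\phi(0,x))\leq\int_0^T L(\phi,\dot\phi)\,dt$, whereas you detour through $c_{L,T}$, transport plans, and dual admissibility of $(u_0,u(T,\cdot))$ — an equivalent argument.
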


\begin{proof}
	Let $u_0, u, V, \sigma$ be defined as in Lemma \ref{smoothflow} and fix $T>0$. For $t\in [0,T]$, define
	\begin{align*}
		\rho(t,\cdot) = \sigma(t,\cdot)_{\#}\rho_0.
	\end{align*}
	We claim that $\sigma$ is an optimizer of the Lagrangian optimal transport problem from $\rho_0$ to $\rho_T = \rho(T,\cdot)$, which would imply that $\rho$ is indeed a displacement interpolant. Let $\phi:[0,T]\times\mathbb{R}^d \to \mathbb{R}^d$ satisfy the four conditions in Definition \ref{lagrangian}. By Lemma \ref{smoothflow}, $t\mapsto \sigma(t,x)$ is a $(u,L)-$calibrated curve for each $x\in\mathbb{R}^d$. Thus, for every $x\in\mathbb{R}^d$,
	\begin{align*}
		u(T,\sigma(T,x)) - u(0,\sigma(0,x)) &= \int_{0}^{T}L(\sigma(t,x),\dot{\sigma}(t,x))\;dt \\
		\iff u(T,\sigma(T,x)) - u_0(x) &= \int_{0}^{T}L(\sigma(t,x),\dot{\sigma}(t,x))\;dt \\
		\implies \int_{\mathbb{R}^d} [u(T,\sigma(T,x)) - u_0(x)]\rho_0(x)\;dx &= \int_{\mathbb{R}^d}\int_{0}^{T}L(\sigma(t,x),\dot{\sigma}(t,x))\rho_0(x)\;dt\;dx 
	\end{align*}
	By the definition of pushforward measure, the LHS of the last equality is
	\begin{align*}
		\int_{\mathbb{R}^d} [u(T,\sigma(T,x)) - u_0(x)]\rho_0(x)\;dx &= \int_{\mathbb{R}^d} u(T,y)\rho_T(y)\;dy - \int_{\mathbb{R}^d} u_0(x)\rho_0(x)\;dx\\
		&= \int_{\mathbb{R}^d} [u(T,\phi(T,x)) - u(0,\phi(0,x))]\rho_0(x)\;dx
	\end{align*}
	where the last equality is due to the assumption that $\phi(T,\cdot)_{\#}\rho_0 = \rho_T$ and $\phi(0,x) = x$. By the definition of $u$ (i.e. \eqref{laxoleinik}), we have that 
	\begin{align*}
		u(T,\phi(T,x)) - u(0,\phi(0,x)) &\leq \int_{0}^{T}L(\phi(t,x),\dot{\phi}(t,x))\;dt\\
		\implies \int_{\mathbb{R}^d} [u(T,\phi(T,x)) - u(0,\phi(0,x))]\rho_0(x)\;dx &\leq \int_{\mathbb{R}^d} \int_{0}^{T}L(\phi(t,x),\dot{\phi}(t,x))\rho_0(x)\;dt\;dx
	\end{align*}
	for every $x\in\mathbb{R}^d$. Thus, 
	\begin{align*}
		\int_{\mathbb{R}^d}\int_{0}^{T}L(\sigma(t,x),\dot{\sigma}(t,x))\rho_0(x)\;dt\;dx  \leq \int_{\mathbb{R}^d} \int_{0}^{T}L(\phi(t,x),\dot{\phi}(t,x))\rho_0(x)\;dt\;dx.
	\end{align*}
	Since $\phi$ was arbitrary, $\sigma$ is indeed an optimizer of the Lagrangian optimal transport problem from $\rho_0$ to $\rho_T$.
	
	By Lemma \ref{smoothflow}, $\sigma(t,\cdot):\mathbb{R}^d \to \mathbb{R}^d$ is a $C^{k}-$diffeomorphism for every $t\in [0,T]$ and $\sigma(\cdot,x)\in C^{k+1}([0,T];\mathbb{R}^d)$ for every $x\in\mathbb{R}^d$. Using the change-of-variables formula,
	\begin{align*}
		\rho(t,y) &= \frac{\rho_0(\cdot)}{|\text{det} \nabla \sigma(t,\cdot)|}\bigg|_{[\sigma(t,\cdot)]^{-1}(y)}\\
		&= \frac{\rho_0(\cdot)}{\text{det} \nabla \sigma(t,\cdot)}\bigg|_{[\sigma(t,\cdot)]^{-1}(y)}
	\end{align*}
	where $\text{det} \nabla \sigma(t,\cdot) >0$ because $\sigma(0,x) = x \implies \text{det} \nabla \sigma(0,\cdot) = 1$. Since $k\geq 3$, $\rho\in C^2([0,T]\times\mathbb{R}^d)$.

\end{proof}

\subsection{Displacement Hessian}
Let $F\in C^2 ((0,+\infty))\cap C([0,+\infty))$ be a function satisfying
\begin{enumerate}[label=(F\arabic*)]
	\item $F(0) = 0$ \label{F1},
	\item $s^2 F''(s)\geq sF'(s) - F(s) \geq 0,$ $\quad\forall s\in [0,+\infty)$.\label{F2}
\end{enumerate}

If $\rho_0 \in \mathcal{P}^{ac}$ is such that $F(\rho_0)\in L^1 (\mathbb{R}^d)$, we define the \textit{generalized entropy functional}
\begin{align}
	\mathcal{F}(\rho_0) = \int_{\mathbb{R}^d}F(\rho_0 (x))\;dx. \label{entropy}
\end{align}
This is well-defined at least on $\mathcal{P}^{ac}\cap C_{c}^0 (\mathbb{R}^d)$ since $F(0)= 0$ implies
\begin{align*}
	\int_{\mathbb{R}^d}F(\rho_0 (x))\;dx = \int_{\text{supp}(\rho_0)}F(\rho_0(x))\;dx 
\end{align*}
which is finite.

\begin{remark}
	If $\rho_0$ is the density of a fluid and $\mathcal{F}(\rho_0)$ is the internal energy, then $\rho_0 F'(\rho_0) - F(\rho_0)$ can be interpreted as a pressure \cite{FGY},\cite{oldandnew}.
\end{remark}

\begin{definition}[Displacement convexity]
	The generalized entropy functional $\mathcal{F}$ is said to be convex along a displacement interpolant $\rho_t$, $t\in [0,T]$, if $\mathcal{F}(\rho_t)$ is finite and
	\begin{align}
		\mathcal{F}(\rho_t) \leq \frac{T-t}{T}\mathcal{F}(\rho_0) + \frac{t}{T}\mathcal{F}(\rho_T)
	\end{align}
	for every $t\in [0,T]$. $\mathcal{F}$ is said to be \textit{displacement convex} if it is convex along every displacement interpolant (on which $\mathcal{F}$ is real-valued).
\end{definition}

\begin{remark}
	When the displacement interpolant is a ``straight line", McCann proved that $\mathcal{F}$ is displacement convex if $s\mapsto s^d F(s^{-d})$ is convex and non-increasing on $(0,+\infty)$ \cite{MCCANN1997153}. In this context, a ``straight line" displacement interpolant refers to one of the form 
	\begin{align*}
		\rho_t = \bigg( \frac{T-t}{T}\text{id} + \frac{t}{T}M  \bigg)_{\#}\rho_0.
	\end{align*}
	where $M$ is the Monge map between $\rho_0$ and $\rho_T$.
\end{remark}

Along a suitable displacement interpolant $\rho_t$, if the map $t\mapsto \mathcal{F}(\rho_t)$ is $C^2$, then the condition that $\frac{d^2}{dt^2}\mathcal{F}(\rho_t)\geq 0$ ensures convexity of $\mathcal{F}$ along $\rho_t$. The following displacement Hessian formula is a special case of Theorem 4.3.2 of \cite{Schachter2017AnEA}.

\begin{theorem}[Displacement Hessian formula]
	Let $\rho\in C^2 ([0,T]\times\mathbb{R}^d)$ be a displacement interpolant, with $\rho_0 = \rho(0,\cdot)$ compactly supported. Let $\sigma:[0,T]\times\mathbb{R}^d \to \mathbb{R}^d$ be an optimizer of the Lagrangian optimal transport problem from $\rho_0$ to $\rho_T$. Let $V:[0,T]\times\mathbb{R}^d \to \mathbb{R}^d$ be defined as in Remark \ref{velocity} so that $\rho,V$ satisfy the continuity equation $\dot{\rho} = -\nabla\cdot (\rho V)$. Assume that $\sigma$ and $V$ are $C^2$ at least on the set
	\begin{align*}
		\bigcup_{t\in [0,T]} \{t\}\times\textnormal{supp}(\rho_t).
	\end{align*}
	Then $\frac{d^2}{dt^2}\mathcal{F}(\rho)$ exists for every $t\in [0,T]$ and is given by
	\begin{align}\label{displacementhessian}
		\frac{d^2}{dt^2}\mathcal{F}(\rho) = \int_{\mathbb{R}^d}(\rho G'(\rho)-G(\rho))(\nabla\cdot V)^2 + G(\rho)(\tr((\nabla V)^2)-\nabla\cdot W)\;dx
	\end{align}
	where $G:[0,+\infty)\to\mathbb{R}$ is defined by
	\begin{align}
		G(s) &= sF'(s) - F(s)\\
		G'(s) &= sF''(s)
	\end{align}
	and
	\begin{align}
		W = \dot{V} + \nabla VV.
	\end{align}
\end{theorem}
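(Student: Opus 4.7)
The plan is to differentiate $\mathcal{F}(\rho_t)=\int F(\rho)\,dx$ twice under the integral sign, using the continuity equation $\dot\rho = -\nabla\cdot(\rho V)$ to eliminate time derivatives of the density, and to integrate by parts freely. The compact support of $\rho_0$ together with the diffeomorphism property of $\sigma(t,\cdot)$ from Lemma \ref{smoothflow} ensures each $\rho_t$ is compactly supported; since $F(0)=0$ forces $G(0)=0$, every boundary term from integration by parts will vanish at the level of regularity assumed.

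For the first derivative I would compute
\begin{align*}
\frac{d}{dt}\mathcal{F}(\rho_t) = \int F'(\rho)\dot\rho\,dx = -\int F'(\rho)\nabla\cdot(\rho V)\,dx = \int \rho F''(\rho)\,\nabla\rho\cdot V\,dx,
\end{align*}
then use the defining identity $G'(s)=sF''(s)$ (so that $\rho F''(\rho)\nabla\rho = \nabla G(\rho)$) followed by one more integration by parts to arrive at
\begin{align*}
\frac{d}{dt}\mathcal{F}(\rho_t) = -\int G(\rho)\,\nabla\cdot V\,dx.
\end{align*}

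Differentiating this expression once more yields
\begin{align*}
\frac{d^2}{dt^2}\mathcal{F}(\rho_t) = -\int G'(\rho)\dot\rho\,(\nabla\cdot V)\,dx - \int G(\rho)\,\nabla\cdot\dot V\,dx.
\end{align*}
In the first integral I would substitute $\dot\rho=-\rho\,\nabla\cdot V - V\cdot\nabla\rho$; this splits off a clean $\int\rho G'(\rho)(\nabla\cdot V)^2\,dx$ contribution, while the remainder $\int G'(\rho)(V\cdot\nabla\rho)(\nabla\cdot V)\,dx = \int (\nabla\cdot V)\,V\cdot\nabla G(\rho)\,dx$ becomes, after integration by parts, $-\int G(\rho)\bigl[(\nabla\cdot V)^2 + V\cdot\nabla(\nabla\cdot V)\bigr]\,dx$. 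For the second integral I would rewrite $\nabla\cdot\dot V$ using $W=\dot V + (\nabla V)V$ together with the algebraic identity
\begin{align*}
\nabla\cdot\bigl((\nabla V)V\bigr) = \tr((\nabla V)^2) + V\cdot\nabla(\nabla\cdot V),
\end{align*}
which I would verify by a direct index computation. This gives $\nabla\cdot\dot V = \nabla\cdot W - \tr((\nabla V)^2) - V\cdot\nabla(\nabla\cdot V)$. Summing the two contributions, the $\pm\int G(\rho)\,V\cdot\nabla(\nabla\cdot V)\,dx$ pieces cancel exactly, and collecting the remaining terms yields the claimed formula.

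Overall the argument is bookkeeping: the key structural observations are that $\nabla G(\rho) = \rho F''(\rho)\nabla\rho$ compactifies two derivatives of $F$ into one of $G$, and that the divergence identity above is precisely what makes $\tr((\nabla V)^2)-\nabla\cdot W$ (rather than the less tractable $\nabla\cdot\dot V$) the natural object on the velocity side. The only step where I would pause for care is the cancellation of the $V\cdot\nabla(\nabla\cdot V)$ cross terms, since this cancellation is what distinguishes the final formula's clean form from a longer expression and is the mechanism by which the material acceleration $W$ enters the displacement Hessian.
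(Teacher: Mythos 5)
Your proof is correct and rests on exactly the same ingredients as the paper's: the continuity equation $\dot\rho = -\nabla\cdot(\rho V)$, integration by parts with vanishing boundary terms (justified by compact support and $G(0)=0$), and the identity $\nabla\cdot(\nabla V\,V)=\tr((\nabla V)^2)+V\cdot\nabla(\nabla\cdot V)$ which is what pulls the material acceleration $W$ into the formula. The only difference is one of ordering: the paper differentiates $\mathcal{F}$ twice up front, landing on $\int F''(\rho)\dot\rho^2 + F'(\rho)\ddot\rho\,dx$ and then unwinding the algebra, whereas you first collapse the first derivative to $\frac{d}{dt}\mathcal{F}=-\int G(\rho)\nabla\cdot V\,dx$ before differentiating again --- a slightly tidier route through the same bookkeeping that, in particular, avoids having to expand $\dot\rho^2$.
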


\begin{remark}
	The requirement that $\rho_0$ is compactly supported serves to ensure that $\mathcal{F}$ is finite along $\rho$. In addition, the compactness of supp$(\rho_0)$ and the continuity of $\sigma$ together ensures that the set
	$\{\sigma(t,x)\;:\; t\in [0,T]\;,\; x\in\text{supp}(\rho_0)\}$ is compact. Thus,
	\begin{align*}
		\Sigma \coloneqq \bigcup_{t\in [0,T]}\text{supp}(\rho_t)
	\end{align*}
	is bounded, up to a set of zero $\mathcal{L}^d -$measure. This means that $\frac{d^2}{dt^2}\mathcal{F}(\rho)$ exists for every $t\in [0,T]$ and satisfies
	\begin{align*}
		\frac{d^2}{dt^2}\mathcal{F}(\rho) &= \frac{d^2}{dt^2} \int_{\Sigma}F(\rho(t,x))\;dx\\
		&= \int_{\Sigma}\frac{d^2}{dt^2}F(\rho(t,x))\;dx.
	\end{align*}
\end{remark}

\begin{remark}
	By Remark \ref{velocity}, for every $t\in[0,T]$, $V(t,\cdot)$ is uniquely defined on supp$(\rho_t)$ $\rho_t-$almost everywhere. Thus, \eqref{displacementhessian} is well-defined.
\end{remark}

\begin{proof}
	The displacement Hessian is
	\begin{align*}
		\frac{d^2}{dt^2}\mathcal{F}(\rho) &= \int F''(\rho)\dot{\rho}^2 + F'(\rho)\ddot{\rho}\;dx\\
		&= \int F''(\rho)\dot{\rho}^2 - F'(\rho)\nabla\cdot(\dot{\rho}V + \rho\dot{V})\;dx
	\end{align*}
	Integrating by parts, the above expression becomes
	\begin{align*}
		\int F''(\rho)\dot{\rho}^2 + \langle \nabla (F'(\rho)) , \dot{\rho}V + \rho\dot{V}\rangle\;dx\\
		= \int F''(\rho)\bigg(\dot{\rho}^2 + \langle\nabla\rho, \dot{\rho}V+\rho\dot{V}\rangle\bigg)\;dx.
	\end{align*}
	Using the continuity equation $\dot{\rho} = -\nabla\cdot (\rho V)$, the definitions of $W$ and $G$, and integration by parts, this integral can be written as
	\begin{align*}
		\int \rho G'(\rho)(\nabla\cdot V)^2 - G(\rho)\nabla\cdot\bigg((\nabla\cdot V)V-\nabla VV + W\bigg)\;dx.
	\end{align*}
	A straightforward computation then yields the desired formula.
\end{proof}

\begin{remark}
	Recall that $\rho G'(\rho) - G(\rho) = \rho^2 F''(\rho) - \rho F'(\rho) + F'(\rho)\geq 0$ and $G(\rho) = \rho F'(\rho) - F(\rho) \geq 0$ by assumption \ref{F2}. Thus, the condition that $\tr((\nabla V)^2)-\nabla\cdot W \geq 0$ would ensure that $\frac{d^2}{dt^2}\mathcal{F}(\rho)\geq 0$. In the case where the cost is given by squared Riemannian distance, the term $\tr((\nabla V)^2)-\nabla\cdot W$ is a quadratic form involving the Bakry-Emery tensor \cite{Schachter2017AnEA}, \cite{oldandnew}. In the following section, we will generalize this quadratic form for an arbitrary Tonelli Lagrangian.
\end{remark}

\section{Generalized curvature for Tonelli Lagrangians}
The goal of this section is to define a generalized curvature for the space $(\mathbb{R}^d, L)$. In principle, this generalized curvature is similar to the Ricci curvature in the sense that it is related to the deformation of a shape flowing along action-minimizing curves. The generalized curvature, however, will not be a tensor because it will depend on both the tangent vector and its gradient. Throughout this section, we will assume that $L$ is a $C^3$ Tonelli Lagrangian.

Let $T>0$ and $\sigma:[0,T]\times\mathbb{R}^d \to \mathbb{R}^d$ be such that  
\begin{align*}
	\begin{cases}
		\frac{d}{dt}\big( (\nabla_v L)(\sigma(t,x),\dot{\sigma}(t,x)) \big) = (\nabla_x L)(\sigma(t,x),\dot{\sigma}(t,x))\;, \quad\forall (t,x)\in [0,T]\times\mathbb{R}^d\\
		\sigma(0,x) = x\;, \quad\forall x\in\mathbb{R}^d\\
		\sigma(t,\cdot) : \mathbb{R}^d\to \mathbb{R}^d \text{ is a } C^3-\text{diffeomorphism for every } t\in [0,T]
	\end{cases}
\end{align*}
Let $V:[0,T]\times\mathbb{R}^d \to \mathbb{R}^d$ be a time-dependent vector field defined by $\dot{\sigma}(t,x) = V(t,\sigma(t,x))$ so that $V(t,\cdot)\in C^2(\mathbb{R}^d ; \mathbb{R}^d)$ for every $t\in [0,T]$.
Following the method outlined in \cite{oldandnew} Chapter 14, we first derive Lemma \ref{jacobi}, which is an ODE of the Jacobian matrix $\nabla\sigma$.

\begin{lemma}\label{jacobi}
	Define $A,B:\mathbb{R}^d \times\mathbb{R}^d \to \mathbb{R}^{d\times d}$ by
	\begin{align}
		A(x,v) &= (\nabla_{vv}^2 L)(x,v)^{-1}\bigg[ \frac{d}{dt}\big((\nabla_{vv}^2 L)(\gamma_{x,v}(t),\dot{\gamma}_{x,v}(t))\big)\bigg|_{t=0} + (\nabla_{vx}^2 L)(x,v) - (\nabla_{xv}^2 L)(x,v) \bigg]\\
		B(x,v) &= (\nabla_{vv}^2 L)(x,v)^{-1}\bigg[  \frac{d}{dt}\big((\nabla_{vx}^2 L)(\gamma_{x,v}(t),\dot{\gamma}_{x,v}(t))\big)\bigg|_{t=0} - (\nabla_{xx}^2 L)(x,v) \bigg]
	\end{align}
	where $\gamma_{x,v}:[0,\epsilon) \to\mathbb{R}^d$ is the unique curve satisfying the Euler-Lagrange equation with initial conditions $\gamma_{x,v}(0) = x$, $\dot{\gamma}_{x,v}(0) = v$. Then the Jacobian $\nabla\sigma$ satisfies a second-order matrix equation
	\begin{align}
		\nabla\ddot{\sigma} + A(\sigma,\dot{\sigma})\nabla\dot{\sigma} + B(\sigma,\dot{\sigma})\nabla\sigma = 0.
	\end{align}
\end{lemma}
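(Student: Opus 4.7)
The plan is to differentiate the Euler--Lagrange equation satisfied by $\sigma$ with respect to the initial position $x$ and read off a second-order linear matrix ODE for the Jacobian $J := \nabla \sigma$; morally, this is the linearization of the EL flow that reproduces the classical Jacobi equation in the Riemannian case.

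First I would differentiate the identity $\frac{d}{dt}((\nabla_v L)(\sigma,\dot\sigma)) = (\nabla_x L)(\sigma,\dot\sigma)$ entrywise in $x$. The $C^3$ regularity of $\sigma(t,\cdot)$ lets me commute $\nabla_x$ with $d/dt$, so $\nabla\dot\sigma = \dot J$ and $\nabla\ddot\sigma = \ddot J$. Applying the chain rule on both sides yields
\[
\frac{d}{dt}\bigl[(\nabla^2_{vx}L)(\sigma,\dot\sigma)\,J + (\nabla^2_{vv}L)(\sigma,\dot\sigma)\,\dot J\bigr] = (\nabla^2_{xx}L)(\sigma,\dot\sigma)\,J + (\nabla^2_{xv}L)(\sigma,\dot\sigma)\,\dot J.
\]

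Next I would expand the left-hand side with the product rule to isolate the $\ddot J$ term and group what is left into coefficients of $\dot J$ and $J$. Since $(\nabla^2_{vv}L)(\sigma,\dot\sigma)$ is positive-definite, hence invertible, multiplying through by its inverse puts the equation in the form $\ddot J + \mathcal A\,\dot J + \mathcal B\,J = 0$, where the $\dot J$-coefficient is built from $\frac{d}{dt}(\nabla^2_{vv}L) + \nabla^2_{vx}L - \nabla^2_{xv}L$ and the $J$-coefficient from $\frac{d}{dt}(\nabla^2_{vx}L) - \nabla^2_{xx}L$, all evaluated along $(\sigma,\dot\sigma)$.

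To match $\mathcal A$ and $\mathcal B$ with the $A(\sigma,\dot\sigma)$ and $B(\sigma,\dot\sigma)$ of the statement, I would use that each slice $s \mapsto \sigma(s,x)$ is itself an EL-trajectory with initial data $(x,V(0,x))$; by uniqueness of solutions to the EL system (cf.\ Definition \ref{lagrangianflow}) the shifted curve $s \mapsto \sigma(t+s,x)$ coincides near $s=0$ with $\gamma_{\sigma(t,x),\dot\sigma(t,x)}$. Therefore the time derivatives of the Hessian blocks appearing above are exactly the $\frac{d}{ds}\big|_{s=0}$-derivatives along $\gamma_{x,v}$ used to define $A$ and $B$, and the claim follows.

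The only real obstacle I anticipate is bookkeeping: keeping the two mixed Hessians $\nabla^2_{xv}L$ and $\nabla^2_{vx}L = (\nabla^2_{xv}L)^\top$ straight. They do not cancel, and their difference is precisely the skew piece that shows up in $A$; sorting out which of them arises from differentiating $\nabla_v L$ in $x$ versus $\nabla_x L$ in $v$ is the one spot where a transposition slip would spoil the final formula.
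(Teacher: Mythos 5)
Your proposal is correct and follows essentially the same route as the paper: differentiate the Euler--Lagrange equation in $x$, expand by chain and product rules, group the coefficients of $\nabla\ddot\sigma$, $\nabla\dot\sigma$, $\nabla\sigma$, and multiply by $(\nabla^2_{vv}L)^{-1}$. Your extra step identifying the along-flow time derivatives with the $t=0$ derivatives along $\gamma_{\sigma(t,x),\dot\sigma(t,x)}$ via uniqueness of EL trajectories is a detail the paper leaves implicit, and including it is a small improvement.
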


\begin{proof}
	Taking the spatial gradient of the Euler-Lagrange equation, 
	\begin{align*}
		0 &= \nabla_x \bigg( \frac{d}{dt}\big( (\nabla_v L)(\sigma,\dot{\sigma}) \big) - (\nabla_x L)(\sigma,\dot{\sigma}) \bigg) \\
		&= \frac{d}{dt}\bigg((\nabla_{vx}^2 L)(\sigma,\dot{\sigma})\nabla\sigma + (\nabla_{vv}^2 L)(\sigma,\dot{\sigma})\nabla\dot{\sigma}\bigg) - (\nabla_{xx}^2 L)(\sigma,\dot{\sigma})\nabla\sigma - (\nabla_{xv}^2 L)(\sigma,\dot{\sigma})\nabla\dot{\sigma}\\
		&= \frac{d}{dt}\big((\nabla_{vx}^2 L)(\sigma,\dot{\sigma})\big)\nabla\sigma + (\nabla_{vx}^2 L)(\sigma,\dot{\sigma})\nabla\dot{\sigma} + \frac{d}{dt}\big((\nabla_{vv}^2 L)(\sigma,\dot{\sigma})\big)\nabla\dot{\sigma} + (\nabla_{vv}^2 L)(\sigma,\dot{\sigma})\nabla\ddot{\sigma}\\
		&\quad- (\nabla_{xx}^2 L)(\sigma,\dot{\sigma})\nabla\sigma - (\nabla_{xv}^2 L)(\sigma,\dot{\sigma})\nabla\dot{\sigma}
	\end{align*}
	To conclude, we group by the terms $\nabla\sigma, \nabla\dot{\sigma}, \nabla\ddot{\sigma}$ and multiply by $(\nabla_{vv}^2 L)(\sigma,\dot{\sigma})^{-1}$.
\end{proof}

\begin{lemma}
	Define
	\begin{align}
		\mathcal{U}(t,x) =
		(\nabla V)(t,\sigma(t,x)).
	\end{align}
	Then
	\begin{align}\label{trace}
		\dot{\mathcal{U}} + \mathcal{U}^2 + A(\sigma,\dot{\sigma})\mathcal{U} + B(\sigma,\dot{\sigma}) = 0.
	\end{align}
\end{lemma}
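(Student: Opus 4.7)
The plan is to use Lemma \ref{jacobi} as a black box and convert its conclusion about $\nabla\sigma$ into the stated matrix Riccati equation for $\mathcal{U}$ by means of the chain rule and the fact that $\sigma(t,\cdot)$ is a diffeomorphism, so $\nabla\sigma(t,x)$ is invertible.

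First I would use the defining identity $\dot{\sigma}(t,x) = V(t,\sigma(t,x))$. Taking the spatial gradient and applying the chain rule gives
\begin{align*}
\nabla\dot{\sigma}(t,x) = (\nabla V)(t,\sigma(t,x))\,\nabla\sigma(t,x) = \mathcal{U}(t,x)\,\nabla\sigma(t,x).
\end{align*}
This is the key bridge between the Jacobian $\nabla\sigma$ (which satisfies the ODE from Lemma \ref{jacobi}) and the object $\mathcal{U}$ we wish to control.

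Next I would differentiate this identity in $t$ to express $\nabla\ddot{\sigma}$ in terms of $\mathcal{U}$ and $\nabla\sigma$:
\begin{align*}
\nabla\ddot{\sigma} = \dot{\mathcal{U}}\,\nabla\sigma + \mathcal{U}\,\nabla\dot{\sigma} = \dot{\mathcal{U}}\,\nabla\sigma + \mathcal{U}^2\,\nabla\sigma = \bigl(\dot{\mathcal{U}} + \mathcal{U}^2\bigr)\nabla\sigma.
\end{align*}
Substituting these expressions for $\nabla\dot{\sigma}$ and $\nabla\ddot{\sigma}$ into the second-order matrix equation of Lemma \ref{jacobi} yields
\begin{align*}
\bigl(\dot{\mathcal{U}} + \mathcal{U}^2 + A(\sigma,\dot{\sigma})\mathcal{U} + B(\sigma,\dot{\sigma})\bigr)\nabla\sigma = 0.
\end{align*}
Finally, since $\sigma(t,\cdot)$ is a $C^3$-diffeomorphism, $\nabla\sigma(t,x)$ is invertible at every $(t,x)$; multiplying on the right by $(\nabla\sigma)^{-1}$ cancels the common factor and gives the desired identity \eqref{trace}.

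There is no real obstacle here — the argument is a direct chain-rule manipulation coupled with Lemma \ref{jacobi}. The only point requiring slight care is the order of matrix multiplication (so that $\mathcal{U}$ appears on the left of $\nabla\sigma$ and the cancellation by $(\nabla\sigma)^{-1}$ happens on the right), and the observation that the invertibility of $\nabla\sigma$ is guaranteed by the standing assumption that $\sigma(t,\cdot)$ is a diffeomorphism.
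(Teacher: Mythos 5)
Your proof is correct and takes essentially the same route as the paper: both use the chain-rule identity $\nabla\dot{\sigma} = \mathcal{U}\,\nabla\sigma$ together with Lemma \ref{jacobi} and the invertibility of $\nabla\sigma$. The only cosmetic difference is that you differentiate the product relation $\nabla\dot{\sigma} = \mathcal{U}\,\nabla\sigma$ and right-cancel $\nabla\sigma$ at the end, whereas the paper first solves $\mathcal{U} = (\nabla\dot{\sigma})(\nabla\sigma)^{-1}$ and then applies the matrix identity $\frac{d}{dt}M^{-1} = -M^{-1}\dot{M}M^{-1}$; these are algebraically equivalent manipulations.
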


\begin{proof}
	First, we note that since $\dot{\sigma}(t,x) = V(t,\sigma(t,x))$, we get
	\begin{align*}
		(\nabla\dot{\sigma})(t,x) &= (\nabla V)(t,\sigma(t,x))(\nabla\sigma)(t,x)\\
		\implies (\nabla V)(t,\sigma(t,x)) &= (\nabla\dot{\sigma})(t,x) ((\nabla\sigma)(t,x))^{-1}
	\end{align*}
	and so 
	\begin{align*}
		\mathcal{U}(t,x) = (\nabla\dot{\sigma})(t,x) ((\nabla\sigma)(t,x))^{-1}.
	\end{align*}
	
	Using the matrix identity $\frac{d}{dt}M^{-1} = -M^{-1}\dot{M}M^{-1}$,
	\begin{align*}
		\dot{\mathcal{U}} &= (\nabla\ddot{\sigma})(\nabla\sigma)^{-1} - (\nabla\dot{\sigma})(\nabla\sigma)^{-1}(\nabla\dot{\sigma})(\nabla\sigma)^{-1}\\ 
        &= (\nabla\ddot{\sigma})(\nabla\sigma)^{-1} - \mathcal{U}^2.
	\end{align*}
	By Lemma \ref{jacobi}, $\nabla\ddot{\sigma} = -A(\sigma,\dot{\sigma})(\nabla\dot{\sigma}) - B(\sigma,\dot{\sigma})(\nabla\sigma)$ and so
	\begin{align*}
		0 &= \dot{\mathcal{U}} + \mathcal{U}^2 + \bigg(A(\sigma,\dot{\sigma})(\nabla\dot{\sigma}) - B(\sigma,\dot{\sigma})(\nabla\sigma)\bigg)(\nabla\sigma)^{-1}\\
		&= \dot{\mathcal{U}} + \mathcal{U}^2 + A(\sigma,\dot{\sigma})\mathcal{U} + B(\sigma,\dot{\sigma}).
	\end{align*}
\end{proof}

We want to show that the term $\tr((\nabla V)^2)-\nabla\cdot W$ appearing in the displacement Hessian formula \eqref{displacementhessian} arises from equation \eqref{trace}. Taking the trace of \eqref{trace}, we have
\begin{align}
	\frac{d}{dt}\bigg((\nabla\cdot V)(t,\sigma)\bigg) + \tr\bigg((\nabla V)(t,\sigma)^2 + A(\sigma,\dot{\sigma})(\nabla V)(t,\sigma) + B(\sigma,\dot{\sigma})\bigg) = 0.
\end{align}
On the other hand, direct computation yields
\begin{align*}
	\frac{d}{dt}\bigg((\nabla\cdot V)(t,\sigma)\bigg) &= (\nabla\cdot\dot{V})(t,\sigma) + \langle V(t,\sigma), (\nabla(\nabla\cdot V))(t,\sigma) \rangle.
\end{align*}
Since $V(t,\sigma(t,x)) = \dot{\sigma}(t,x)$ and $\sigma(0,x) = x$, we may restate the above equation as
\begin{align}\label{bochner}
	&(\nabla\cdot\dot{V})(t,x) + \langle V(t,x), (\nabla(\nabla\cdot V))(t,x) \rangle \nonumber \\ 
	&+ \tr\bigg((\nabla V)(t,x)^2 + A(x,V(t,x))(\nabla V)(t,x) + B(x,V(t,x))\bigg) = 0.
\end{align}
Using the identities
\begin{align*}
	\nabla\cdot((\nabla\cdot V)V) = (\nabla\cdot V)^2 + \langle V,\nabla(\nabla\cdot V) \rangle
\end{align*}
and 
\begin{align*}
	\dot{V} = -\nabla VV + W ,
\end{align*}
we see that
\begin{align*}
	\nabla\cdot\dot{V} + \langle V,\nabla(\nabla\cdot V) \rangle &= \nabla\cdot (-\nabla VV + W) + \nabla\cdot((\nabla\cdot V)V) - (\nabla\cdot V)^2.
\end{align*}
By the computation of the displacement Hessian from the previous section, this is precisely $\tr((\nabla V)^2)-\nabla\cdot W$.

At this point, \eqref{bochner} holds for all time-dependent $C^2$ vector fields whose integral curves satisfy the Euler-Lagrange equation \eqref{eulerlagrange}. To show that \eqref{bochner} holds for an arbitrary fixed vector field, we first need to make sense of the term $\dot{V}$ by introducing Definition \ref{gamma}.

\begin{prop}\label{extend}
	Given any fixed vector field $V_0 \in C^2(\mathbb{R}^d ; \mathbb{R}^d)$, we may extend it for a short time to a unique time-dependent vector field $V(t,x)$, $t\in [0,\epsilon)$ with the following properties:
	\begin{enumerate}[label=(\roman*)]
		\item $V(0,\cdot) = V_0$
		\item The integral curves of $V$ satisfy the Euler-Lagrange equation, i.e. 
		\begin{align*}
			\dot{\sigma}(t,x) &= V(t,\sigma(t,x))\\
			\sigma(0,x) &= x\\
			\frac{d}{dt}((\nabla_v L)(\sigma,\dot{\sigma})) &= (\nabla_x L)(\sigma,\dot{\sigma})
		\end{align*}
	\end{enumerate}
\end{prop}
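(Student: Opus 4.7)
My plan is to build $V$ from the Lagrangian flow $\Phi$ of Definition~\ref{lagrangianflow} by prescribing its initial velocity pointwise to be $V_0$. Concretely, I would set
\begin{align*}
    \sigma(t,x) \coloneqq \Phi_1(t,x,V_0(x)), \qquad \dot\sigma(t,x) = \Phi_2(t,x,V_0(x)),
\end{align*}
so that for each fixed $x$ the curve $t \mapsto \sigma(t,x)$ is the unique solution of the Euler-Lagrange equation with $\sigma(0,x) = x$ and $\dot\sigma(0,x) = V_0(x)$. Since $L \in C^3$ and $V_0 \in C^2$, standard ODE theory gives $\sigma \in C^2$ jointly in $(t,x)$ on its domain of existence. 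Further, $\sigma(0,\cdot) = \mathrm{id}$ yields $\nabla_x \sigma(0,x) = I$, so continuity of $\nabla_x \sigma$ and the inverse function theorem furnish, for each $x_0 \in \mathbb{R}^d$, a neighborhood $\Omega \ni x_0$ and $\epsilon > 0$ on which $\sigma(t,\cdot) : \Omega \to \sigma(t,\Omega)$ is a $C^2$ diffeomorphism for every $t \in [0,\epsilon)$.

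I then define $V$ by pushing $\dot\sigma$ forward along the flow: for $y \in \sigma(t,\Omega)$, put
\begin{align*}
    V(t,y) \coloneqq \dot\sigma\bigl(t,\, \sigma(t,\cdot)^{-1}(y)\bigr).
\end{align*}
Both properties then follow directly: $V(0,x) = \dot\sigma(0,x) = V_0(x)$, and by construction $\dot\sigma(t,x) = V(t,\sigma(t,x))$, so the integral curve of $V$ emanating from $x$ at time $0$ coincides with $\sigma(\cdot,x)$, which solves the Euler-Lagrange equation by virtue of being an orbit of $\Phi$. Regularity of $V$ in $(t,y)$ follows from the $C^2$ regularity of $\sigma$ and $\dot\sigma$ together with the $C^2$ regularity of the inverse $\sigma(t,\cdot)^{-1}$.

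For uniqueness, suppose $\tilde V$ is another $C^2$ extension satisfying (i) and (ii). Its integral curves $\tilde\sigma(\cdot,x)$ satisfy the Euler-Lagrange equation with $\tilde\sigma(0,x) = x$ and $\dot{\tilde\sigma}(0,x) = \tilde V(0,x) = V_0(x)$. Because $(\nabla_{vv}^2 L)$ is everywhere invertible, the Euler-Lagrange system is a well-posed second-order ODE in $\sigma$, so $\tilde\sigma(\cdot,x) = \sigma(\cdot,x)$ on their common time interval. Unwinding the defining relation $\tilde V(t,\tilde\sigma(t,x)) = \dot{\tilde\sigma}(t,x)$, one gets $\tilde V = V$ wherever both are defined.

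The main obstacle I anticipate is ensuring that the local-in-space inversions of $\sigma(t,\cdot)$ assemble into a field on a common time interval $[0,\epsilon)$. Without growth or compactness assumptions on $V_0$ the diffeomorphism property may fail to hold globally on $\mathbb{R}^d$ with a uniform $\epsilon$; however, uniqueness lets the local patches be glued consistently, producing $V$ on an open neighborhood of $\{0\}\times\mathbb{R}^d$, which is all that is needed to make sense of $\dot V$ at $t=0$ and to validate the pointwise identity \eqref{bochner} at any given spatial point.
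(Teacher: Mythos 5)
Your construction is exactly the paper's: set $\sigma(t,x)=\Phi_1(t,x,V_0(x))$ using the Lagrangian flow, invert $\sigma(t,\cdot)$ for small $t$, and define $V(t,y)=\dot\sigma(t,\sigma(t,\cdot)^{-1}(y))$. Your additional remarks on uniqueness (via well-posedness of the Euler--Lagrange system, since $\nabla^2_{vv}L$ is invertible) and on whether the inversion holds with a spatially uniform $\epsilon$ are sound refinements of points the paper's proof states without elaboration, so the proposal is correct and follows essentially the same route.
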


\begin{proof}
	We recall Definition \ref{lagrangianflow} and the existence of a Lagrangian flow $\Phi = (\Phi_1,\Phi_2)$ satisfying $\frac{d}{dt}((\nabla_v L)(\Phi)) = (\nabla_x L)(\Phi)$. Set $\sigma(t,x) = \Phi_1(t,x,V_0(x))$. The maps $\sigma(t,\cdot):\mathbb{R}^d \to \mathbb{R}^d$ are defined for all $t\in[0,+\infty)$ and there exists $\epsilon>0$ such that $\sigma(t,\cdot)$ is invertible for $t\in[0,\epsilon)$.
	Thus, for $t\in[0,\epsilon)$, we may define the desired vector field by
	\begin{align}
		V(t,y) = \dot{\sigma}(t,\sigma^{-1}(t,y)).
	\end{align}
\end{proof}

\begin{definition}\label{gamma}
	Given a Tonelli Lagrangian $L$, we define the operation 
	\begin{align*}
		\Gamma_L : C^2(\mathbb{R}^d ; \mathbb{R}^d) &\to C^2(\mathbb{R}^d ; \mathbb{R}^d)\\
		V_0 &\mapsto \dot{V}(0,\cdot)
	\end{align*}
	as in Proposition \ref{extend}.
\end{definition}

\begin{remark}
	By the Euler-Lagrange equation \eqref{eulerlagrange}, we can give an explicit formula for $\Gamma_L(V_0)$. Suppose $\sigma(t,x)$ and $V(t,x)$ satisfy the two properties in Proposition \ref{extend}, then
	\begin{align*}
		\ddot{\sigma}(t,x) = \dot{V}(t,\sigma(t,x))+(\nabla V)(t,\sigma(t,x))V(t,\sigma(t,x)).
	\end{align*}
	Since 
	\begin{align*}
		\ddot{\sigma}(t,x) = (\nabla_{vv}^2 L)(x,V(t,x))^{-1}\bigg( (\nabla_x L)(x,V(t,x)) - (\nabla_{vx}^2 L)(x,V(t,x))V(t,x) \bigg)
	\end{align*}
	by the Euler-Lagrange equation, we have
	\begin{align*}
		(\Gamma_L(V_0))(x) &= \dot{V}(0,x)\\
		&= (\nabla_{vv}^2 L)(x,V_0(x))^{-1}\bigg( (\nabla_x L)(x,V_0(x)) - (\nabla_{vx}^2 L)(x,V_0(x))V_0(x) \bigg) \\
		&\quad - (\nabla V_0 (x))V_0(x).
	\end{align*}
\end{remark}

\begin{definition}[Generalized curvature]\label{generalizedcurvature}
	Let $\xi\in C^2(\mathbb{R}^d ; \mathbb{R}^d)$. For each $x\in\mathbb{R}^d$, we define the \textit{generalized curvature} $\mathcal{K}_x$ by
	\begin{align}
		\mathcal{K}_x (\xi) \coloneqq \tr \bigg(\nabla \xi(x)^2 + A(x,\xi(x))\nabla \xi(x) + B(x,\xi(x))\bigg)
	\end{align}
	where $A,B:\mathbb{R}^d \times\mathbb{R}^d \to \mathbb{R}^{d\times d}$ are defined as in Lemma \ref{jacobi}.
\end{definition}

\begin{theorem}\label{coordinatefree}
	Let $\xi\in C^2(\mathbb{R}^d ; \mathbb{R}^d)$. Then
	\begin{align}
		-\big(\nabla\cdot\big(\Gamma_L (\xi)\big)\big)(x) - \langle \xi(x),(\nabla(\nabla\cdot \xi))(x) \rangle = \mathcal{K}_x(\xi).
	\end{align}
	In particular, the generalized curvature $\mathcal{K}_x$ is intrinsic, i.e. does not depend on the choice of coordinates.
\end{theorem}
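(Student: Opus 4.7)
The plan is to reduce the identity to equation \eqref{bochner}, which was already established for any time-dependent vector field whose integral curves solve the Euler-Lagrange equation. Given an arbitrary $\xi\in C^2(\mathbb{R}^d;\mathbb{R}^d)$, the strategy is to embed $\xi$ as the time-zero slice of such a field, then read off the pointwise identity by evaluating \eqref{bochner} at $t=0$.

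First, I would apply Proposition \ref{extend} with $V_0 = \xi$ to obtain a time-dependent field $V\in C^2([0,\epsilon)\times\mathbb{R}^d;\mathbb{R}^d)$ with $V(0,\cdot) = \xi$ whose flow $\sigma(t,x)$, with $\sigma(0,x) = x$, satisfies the Euler-Lagrange equation. By Definition \ref{gamma}, this gives $\Gamma_L(\xi) = \dot{V}(0,\cdot)$. Since $V$ is precisely of the form for which \eqref{bochner} was derived, substituting $t=0$ and using $\sigma(0,x)=x$, $V(0,x)=\xi(x)$, $\dot{V}(0,x) = \Gamma_L(\xi)(x)$, and $\nabla V(0,x) = \nabla\xi(x)$ yields
\begin{align*}
\bigl(\nabla\cdot\Gamma_L(\xi)\bigr)(x) + \bigl\langle \xi(x), (\nabla(\nabla\cdot\xi))(x)\bigr\rangle + \tr\bigl(\nabla\xi(x)^2 + A(x,\xi(x))\nabla\xi(x) + B(x,\xi(x))\bigr) = 0.
\end{align*}
Rearranging and identifying the trace term as $\mathcal{K}_x(\xi)$ by Definition \ref{generalizedcurvature} gives the claimed equality.

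For the coordinate-independence claim, the point is that $\Gamma_L$ is defined purely through the Euler-Lagrange flow associated with $L$, which is an intrinsic geometric object: under a change of coordinates on $\mathbb{R}^d$ the Lagrangian transforms, but the resulting trajectories, and hence $\Gamma_L(\xi)$ applied to the transformed $\xi$, transform covariantly. Combined with the analogous intrinsic meaning of the divergence and pairing on the LHS, this shows that $-\nabla\cdot\Gamma_L(\xi) - \langle\xi,\nabla(\nabla\cdot\xi)\rangle$ is chart-independent, and the identity just proved transfers that property to $\mathcal{K}_x(\xi)$, whose coordinate-dependent-looking definition through $A$ and $B$ is thereby revealed to be intrinsic.

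The main obstacle is conceptual rather than computational: the heavy lifting was already done in deriving \eqref{bochner} from Lemma \ref{jacobi}. What remains is (i) making sure the extension of $\xi$ supplied by Proposition \ref{extend} is regular enough that the substitution at $t = 0$ makes sense — which is immediate since $V$ is $C^2$ in $(t,x)$ — and (ii) articulating the intrinsic nature of $\Gamma_L$ carefully enough to justify the second assertion. I do not anticipate any essential difficulty beyond these bookkeeping points.
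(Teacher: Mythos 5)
Your proposal is correct and follows the same route as the paper: extend $\xi$ via Proposition \ref{extend}, evaluate \eqref{bochner} at $t=0$ using $\Gamma_L(\xi)=\dot V(0,\cdot)$, and then argue that both terms on the left-hand side (the divergence of $\Gamma_L(\xi)$ and the directional derivative of $\nabla\cdot\xi$ along $\xi$) are coordinate-free, so the identity makes $\mathcal{K}_x$ intrinsic. No substantive differences from the paper's own proof.
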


\begin{proof}
	By Proposition \ref{extend}, we may extend $\xi$ for a short time to a time-dependent vector field $V(t,x)$, with $V(0,\cdot) = \xi$, whose integral curves satisfy the Euler-Lagrange equation. Thus, \eqref{bochner} holds for $V$ and we have
	\begin{align*}
		\mathcal{K}_x(\xi) &= \mathcal{K}_x(V(0,\cdot))\\
		&\overset{\eqref{bochner}}{=} -(\nabla\cdot\dot{V})(0,x) - \langle V(0,x),(\nabla(\nabla\cdot V))(0,x) \rangle\\
		&= -\big(\nabla\cdot\big(\Gamma_L (\xi)\big)\big)(x) - \langle \xi(x),(\nabla(\nabla\cdot \xi))(x) \rangle
	\end{align*}
	To show that $\mathcal{K}_x$ is intrinsic, we will show that the operator 
	\begin{align}
		\xi\mapsto -\nabla\cdot\big(\Gamma_L (\xi)\big) - \langle \xi,\nabla(\nabla\cdot \xi) \rangle
	\end{align}
	is invariant under a change of coordinates. By Definition 
	\ref{gamma} and the definition of divergence, $-\nabla\cdot\big(\Gamma_L (\xi)\big)$ is coordinate-free. Next, observe that $\langle \xi,\nabla(\nabla\cdot \xi) \rangle$ is the directional derivative of $\nabla\cdot\xi$ (which is coordinate-free) with respect to $\xi$. Thus,
	\begin{align*}
		\langle \xi(x),\nabla(\nabla\cdot \xi)(x) \rangle &= \lim_{h\to 0}\frac{(\nabla\cdot\xi)(x + h\xi(x)) - (\nabla\cdot\xi)(x)}{h}
	\end{align*}
	is also coordinate-free.
\end{proof}

In the case where $\xi(x) = (\nabla_p H)(x,\nabla u(x)) \iff \nabla u(x) = (\nabla_v L)(x,\xi(x))$ for some potential $u:\mathbb{R}^d \to \mathbb{R}$ (cf. Proposition \ref{laxoleinikprop}, Lemma \ref{smoothflow}), we can derive an explicit formula for $\mathcal{K}_x(\xi)$.

\begin{theorem}[Formula for $\mathcal{K}_x(\xi)$]\label{formula}
	Let $\xi\in C^2(\mathbb{R}^d ; \mathbb{R}^d)$ be such that there exists $u\in C^2(\mathbb{R}^d)$, with
	\begin{align*}
		\nabla u(x) = (\nabla_v L)(x, \xi(x))\;,\quad \forall x\in\mathbb{R}^d.
	\end{align*}
	Then,
	\begin{align*}
		\mathcal{K}_x(\xi) 
		&= L^{ik}\frac{\partial \xi_j}{\partial x_k}\frac{\partial^2 L}{\partial v_j \partial v_l}\frac{\partial \xi_l}{\partial x_i} 
		- L^{im}\frac{\partial^3 L}{\partial v_m \partial v_j \partial v_k}\xi_l \frac{\partial \xi_j}{\partial x_i}\frac{\partial \xi_k}{\partial x_l}\\
		&\quad + L^{im}\frac{\partial^3 L}{\partial v_m \partial v_j \partial v_k}L^{kl}\frac{\partial L}{\partial x_l}\frac{\partial \xi_j}{\partial x_i}
		- L^{ir}\frac{\partial^3 L}{\partial v_r \partial v_j \partial v_k}L^{kl}\frac{\partial^2 L}{\partial x_l \partial v_m}\frac{\partial \xi_j}{\partial x_i}\xi_m\\
		&\quad - L^{kl}\frac{\partial^3 L}{\partial x_k \partial v_j \partial v_l}\frac{\partial \xi_j}{\partial x_i}\xi_i
		+ L^{ij}\frac{\partial^3 L}{\partial x_i \partial v_j \partial v_k}L^{kl}\frac{\partial L}{\partial x_l}\\
		&\quad - L^{ij}\frac{\partial^3 L}{\partial x_i \partial v_j \partial v_k}L^{kl}\frac{\partial^2 L}{\partial x_l \partial v_m}\xi_m - L^{ij}\frac{\partial^2 L}{\partial x_j \partial x_i}
	\end{align*}
	where all terms involving $L$ are evaluated at $(x,\xi(x))$.
\end{theorem}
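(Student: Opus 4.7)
The plan is to unpack the definition of $\mathcal{K}_x(\xi)$ term by term, using the Euler--Lagrange equation to evaluate the time derivatives built into $A$ and $B$, and then use the assumption on $\xi$ to handle the part that is not obviously symmetric. Writing $\mathcal{K}_x(\xi) = \tr((\nabla\xi)^2) + \tr\bigl(A(x,\xi)\nabla\xi\bigr) + \tr B(x,\xi)$, I first observe that along $\gamma_{x,\xi(x)}$ the Euler--Lagrange equation gives
\begin{align*}
    \ddot{\gamma}(0) \;=\; (\nabla_{vv}^2 L)(x,\xi)^{-1}\bigl[(\nabla_x L)(x,\xi) - (\nabla_{vx}^2 L)(x,\xi)\,\xi(x)\bigr],
\end{align*}
so that in indices $\ddot{\gamma}_k(0) = L^{kl}\bigl(\partial L/\partial x_l - (\partial^2 L/\partial x_l\partial v_m)\xi_m\bigr)$, where $L^{kl}$ denotes the inverse of $\nabla_{vv}^2 L$ at $(x,\xi(x))$. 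Then by the chain rule,
\begin{align*}
    \tfrac{d}{dt}(\nabla^2_{vv}L)(\gamma,\dot\gamma)\big|_{t=0} \;=\; (\nabla^3_{xvv}L)\cdot\xi + (\nabla^3_{vvv}L)\cdot\ddot\gamma(0),
\end{align*}
and similarly for $\tfrac{d}{dt}(\nabla^2_{vx}L)$. Substituting these into the definitions of $A$ and $B$ and pulling the inverse $L^{ij}$ out front produces precisely the six terms in the formula that involve third derivatives of $L$, together with the two pure second-derivative terms $L^{ik}\partial_k\xi_j L_{jl}\partial_i\xi_l$ and $-L^{ij}\partial_i\partial_j L$.

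The step that is not mechanical is matching the very first term $L^{ik}\frac{\partial \xi_j}{\partial x_k}\frac{\partial^2 L}{\partial v_j\partial v_l}\frac{\partial \xi_l}{\partial x_i}$, which is $\tr(L^{-1}(\nabla\xi)^\top L\,\nabla\xi)$ rather than $\tr((\nabla\xi)^2)$, and dealing with the antisymmetric piece $L^{-1}\bigl[(\nabla_{vx}^2 L) - (\nabla_{xv}^2 L)\bigr]\nabla\xi$ coming from $A$. Here I invoke the hypothesis that $\nabla u(x) = (\nabla_v L)(x,\xi(x))$. Differentiating this identity in $x_i$ gives
\begin{align*}
    \partial_i\partial_j u \;=\; \frac{\partial^2 L}{\partial x_i\partial v_j}(x,\xi) + \frac{\partial^2 L}{\partial v_j\partial v_k}(x,\xi)\,\frac{\partial \xi_k}{\partial x_i},
\end{align*}
and the symmetry $\partial_i\partial_j u = \partial_j\partial_i u$ yields, in matrix form, the key identity
\begin{align*}
    (\nabla\xi)^\top (\nabla_{vv}^2 L) \;=\; (\nabla_{vv}^2 L)(\nabla\xi) + (\nabla_{vx}^2 L) - (\nabla_{xv}^2 L).
\end{align*}
Multiplying on the right by $(\nabla_{vv}^2 L)^{-1}\nabla\xi$ and taking the trace converts $\tr((\nabla\xi)^2)$ into $\tr(L^{-1}(\nabla\xi)^\top L\,\nabla\xi)$ at the cost of a term that exactly cancels the contribution of $L^{-1}[(\nabla_{vx}^2 L)-(\nabla_{xv}^2 L)]\nabla\xi$ from $A$. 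After this cancellation, the remaining terms line up with the eight terms of the claimed formula once all indices are renamed consistently.

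The main obstacle is purely organizational: there are roughly a dozen terms to track, each involving a product of up to three tensors contracted with $\xi$ and $\nabla\xi$, and the indices on $\nabla_{xv}^2 L$ vs $\nabla_{vx}^2 L$ matter (they are transposes of each other, which is precisely what the symmetry identity above exploits). I would carry this out by first recording a clean expression for each of $\tr((\nabla\xi)^2)$, $\tr(A\nabla\xi)$, and $\tr B$ in indices, then applying the gradient-form identity exactly once at the end to convert the unwanted $\tr((\nabla\xi)^2)$ into the desired $\tr(L^{-1}(\nabla\xi)^\top L\,\nabla\xi)$ while simultaneously absorbing the skew piece of $A$. No step is conceptually hard beyond this reduction; the danger is only in bookkeeping.
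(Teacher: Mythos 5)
Your overall organization mirrors the paper's proof: you split $\mathcal{K}_x(\xi)$ into $\tr((\nabla\xi)^2)+\tr(A\nabla\xi)+\tr B$, and you use the symmetry of $\nabla^2 u$ (from $\nabla u=(\nabla_v L)(x,\xi)$) to convert $\tr((\nabla\xi)^2)$ plus the skew part $L^{-1}[(\nabla^2_{vx}L)-(\nabla^2_{xv}L)]\nabla\xi$ of $A$ into $\tr\big(L^{-1}(\nabla\xi)^\top L\,\nabla\xi\big)$; that identity is exactly the paper's first appendix lemma and you state it correctly. The gap is in how you evaluate the $\frac{d}{dt}$ terms inside $A$ and $B$. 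You take the total derivative along the Euler--Lagrange curve, $\frac{d}{dt}(\nabla^2_{vv}L)(\gamma,\dot\gamma)\big|_{t=0}=(\nabla_x\partial^2_{vv}L)\cdot\xi+(\nabla_v\partial^2_{vv}L)\cdot\ddot\gamma(0)$, and assert that substituting this (and its analogue for $\nabla^2_{vx}L$) ``produces precisely'' the six third-derivative terms of the statement. It does not: your route generates a contribution $+L^{im}\frac{\partial^3 L}{\partial x_k\partial v_m\partial v_j}\xi_k\frac{\partial\xi_j}{\partial x_i}$ from $A$ and, from $B$, a term involving $\frac{\partial^3 L}{\partial x_i\partial x_k\partial v_j}\xi_k$, neither of which occurs in the claimed formula, and it never produces the two ``convective'' terms that do occur, namely $-L^{im}\frac{\partial^3 L}{\partial v_m\partial v_j\partial v_k}\xi_l\frac{\partial\xi_j}{\partial x_i}\frac{\partial\xi_k}{\partial x_l}$ and $-L^{kl}\frac{\partial^3 L}{\partial x_k\partial v_j\partial v_l}\frac{\partial\xi_j}{\partial x_i}\xi_i$.

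The paper's proof obtains exactly those terms because it evaluates the time derivatives in the Eulerian way: $\xi$ is extended for a short time via Proposition \ref{extend}, and one differentiates $(\nabla^2_{vv}L)(x,V(t,x))$ and $(\nabla^2_{vx}L)(x,V(t,x))$ at fixed $x$, so the vector contracted against $\nabla_v\partial^2_{vv}L$ is $\dot\xi=\Gamma_L(\xi)=\ddot\gamma(0)-(\nabla\xi)\xi$ (Definition \ref{gamma}), not $\ddot\gamma(0)$, and no $\nabla_x$-derivatives of the second derivatives of $L$ appear. The two evaluations differ by $\langle\nabla_x\partial^2_{vv}L,\xi\rangle+\langle\nabla_v\partial^2_{vv}L,(\nabla\xi)\xi\rangle$ (and similarly for the $B$-term), and this discrepancy does not cancel, not even under your gradient hypothesis: already in $d=1$, where the hypothesis is vacuous, the two resulting expressions differ by $L_{vv}^{-1}\,\xi\,[L_{vvv}(\xi')^2+2L_{xvv}\xi'+L_{xxv}]$. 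So, as written, carrying out your plan yields a formula different from the one in the statement; the claim of exact term-matching is the step that fails. To prove the stated identity you must evaluate the $\frac{d}{dt}$ terms on $\Gamma_L(\xi)$ as the appendix does (your literal along-the-curve reading of Lemma \ref{jacobi} is defensible, but it provably does not lead to the displayed right-hand side, so the mismatch must at least be addressed rather than asserted away).
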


\begin{proof}
	See Appendix.
\end{proof}

In conclusion, the displacement Hessian formula in \eqref{displacementhessian} can be written as
\begin{align}
	\frac{d^2}{dt^2}\mathcal{F}(\rho) = \int_{\mathbb{R}^d}(\rho G'(\rho)-G(\rho))(\nabla\cdot V)^2 + G(\rho)\mathcal{K}_x (V)\;dx.
\end{align}

\section{Displacement convexity for a non-Riemannian Lagrangian cost}
In this section, we provide an example of a Lagrangian cost that is not a squared Riemannian distance. We prove using a perturbation argument that the corresponding generalized curvature is non-negative and thus the generalized entropy functional is convex along $C^2$ displacement interpolants.

Let $g(x)$ be a positive definite matrix for every $x\in\mathbb{R}^d$ so that $\frac{1}{2}\langle v, g(x)v\rangle$, $v\in\mathbb{R}^d$ defines a Riemannian metric. Let $g_{ij} = g_{ij}(x)$ denote the $ij$-th entry of $g(x)$ and $g^{ij} = g^{ij}(x)$ denote the $ij$-th entry of the inverse matrix $g(x)^{-1}$. Further assume that the $g_{ij}$ are bounded with bounded derivatives, and that the corresponding Bakry-Emery tensor (denoted BE$_g$) is bounded from below. That is,
\begin{align*}
	\text{BE}_g = \text{Ric}+\nabla^2 \bigg(\frac{1}{2}\text{log}(\text{det }g)\bigg)\geq k_g >0.
\end{align*}

Define the Lagrangian
\begin{align*}
	L(x,v) = \frac{1}{2}\langle v, g(x)v\rangle
\end{align*}
and the perturbed Lagrangian
\begin{align*}
	\tilde{L}(x,v) = \frac{1}{2}\langle v, g(x)v\rangle + \varphi(v)
\end{align*}
where $\varphi:\mathbb{R}^d \to \mathbb{R}$ is a smooth perturbation (for instance, take $\varphi$ to be of Schwartz class). Using Theorem \ref{formula}, the respective generalized curvatures are given by
\begin{align*}
	\mathcal{K}_x(\xi) 
	&= \underbrace{g^{ik}\frac{\partial \xi_j}{\partial x_k}g_{jl}\frac{\partial \xi_l}{\partial x_i}}_{\RN{1}} - 
	\underbrace{g^{kl}\frac{\partial g_{jl}}{\partial x_k}\frac{\partial \xi_j}{\partial x_i}\xi_i}_{\RN{5}} \\
	&\quad 
	+ \underbrace{g^{ij}\frac{\partial g_{jk}}{\partial x_i}g^{kl}\frac{\partial g_{mn}}{\partial x_l}\xi_m \xi_n}_{\RN{6}}\\
	&\quad - \underbrace{g^{ij}\frac{\partial g_{jk}}{\partial x_i}g^{kl}\frac{\partial g_{nm}}{\partial x_l}\xi_n\xi_m}_{\RN{7}} 
	- \underbrace{g^{ij}\frac{\partial^2 g_{kl}}{\partial x_j \partial x_i}\xi_k \xi_l}_{\RN{8}}
\end{align*}
and
\begin{align*}
	\tilde{\mathcal{K}}_x(\xi) 
	&= \underbrace{\tilde{L}^{ik}\frac{\partial \xi_j}{\partial x_k}\frac{\partial^2 \tilde{L}}{\partial v_j \partial v_l}\frac{\partial \xi_l}{\partial x_i}}_{\tilde{\RN{1}}} 
	- \underbrace{\tilde{L}^{im}\frac{\partial^3 \varphi}{\partial v_m \partial v_j \partial v_k}\xi_l \frac{\partial \xi_j}{\partial x_i}\frac{\partial \xi_k}{\partial x_l}}_{\tilde{\RN{2}}}\\
	&\quad + \underbrace{\tilde{L}^{im}\frac{\partial^3 \varphi}{\partial v_m \partial v_j \partial v_k}\tilde{L}^{kl}\frac{\partial g_{nr}}{\partial x_l}\xi_n \xi_r \frac{\partial \xi_j}{\partial x_i}}_{\tilde{\RN{3}}}
	- \underbrace{\tilde{L}^{ir}\frac{\partial^3 \varphi}{\partial v_r \partial v_j \partial v_k}\tilde{L}^{kl}\frac{\partial g_{mn}}{\partial x_l}\xi_n
		\frac{\partial \xi_j}{\partial x_i}\xi_m}_{\tilde{\RN{4}}}\\
	&\quad - \underbrace{\tilde{L}^{kl}\frac{\partial g_{jl}}{\partial x_k}
		\frac{\partial \xi_j}{\partial x_i}\xi_i}_{\tilde{\RN{5}}}
	+ \underbrace{\tilde{L}^{ij}\frac{\partial g_{jk}}{\partial x_i}
		\tilde{L}^{kl}\frac{\partial g_{mn}}{\partial x_l}\xi_m \xi_n}_{\tilde{\RN{6}}}\\
	&\quad - \underbrace{\tilde{L}^{ij}\frac{\partial g_{jk}}{\partial x_i}\tilde{L}^{kl}\frac{\partial g_{mn}}{\partial x_l}\xi_n \xi_m}_{\tilde{\RN{7}}} 
	- \underbrace{\tilde{L}^{ij}\frac{\partial^2 g_{kl}}{\partial x_j \partial x_i}\xi_k \xi_l}_{\tilde{\RN{8}}}
\end{align*}

By Theorem A.3.1 of \cite{Schachter2017AnEA} and \eqref{bochner}, $\mathcal{K}_x(\xi) = ||g^{-1}\nabla\xi^\top||_{\text{HS}}^2 + \text{BE}_g(\xi)$, where $||\cdot||_{\text{HS}}$ denotes the Hilbert-Schmidt norm. Thus, we have a lower bound
\begin{align*}
	\mathcal{K}_x(\xi) = ||g^{-1}\nabla\xi^\top||_{\text{HS}}^2 + \text{BE}_g(\xi) \geq c_g||\nabla\xi||^2 + k_g ||\xi||^2
\end{align*}
where $c_g>0$ is a constant depending on $g$. Fix $\epsilon>0$ such that $\epsilon\leq \min\{\frac{c_g}{10},\frac{k_g}{12}\}$. Our goal is to choose $\varphi$ so that 
\begin{enumerate}
	\item $|\tilde{L}^{ij} - L^{ij}| = |\tilde{L}^{ij} - g^{ij}|$ is sufficiently small, i.e. $||\nabla^2 \varphi||$ is close to zero, and
	\item $|\frac{\partial^3 \varphi}{\partial v_i \partial v_j \partial v_k}|$ is sufficiently small.
\end{enumerate}

To this end, we choose $\varphi$ such that
\begin{align*}
	|\tilde{\mathcal{K}}_x(\xi) - \mathcal{K}_x(\xi)|&\leq 
	|\tilde{\RN{1}}-\RN{1}|+
	|\tilde{\RN{5}}-\RN{5}|+
	|\tilde{\RN{6}}-\RN{6}|+
	|\tilde{\RN{7}}-\RN{7}|+
	|\tilde{\RN{8}}-\RN{8}|\\
	&\quad + |\tilde{\RN{2}}|+|\tilde{\RN{3}}|+|\tilde{\RN{4}}|\\
	&\leq \epsilon ||\nabla\xi||^2 + 2\epsilon ||\nabla\xi||||\xi|| + \epsilon ||\xi||^2 + \epsilon ||\xi||^2 + \epsilon ||\xi||^2 \\
	&\quad + \epsilon||\nabla\xi||^2 + 2\epsilon ||\nabla\xi||||\xi|| + 2\epsilon ||\nabla\xi||||\xi||\\
	&\leq  5\epsilon ||\nabla\xi||^2 + 6\epsilon||\xi||^2\\
	&\leq \frac{c_g}{2}||\nabla\xi||^2 +  \frac{k_g}{2}||\xi||^2
\end{align*}

Since $\mathcal{K}_x(\xi) \geq c_g||\nabla\xi||^2 + k_g ||\xi||^2$, we conclude that $\tilde{\mathcal{K}}_x(\xi) \geq 0$.

\section{Appendix}
The generalized curvature $\mathcal{K}_x(\xi)$ is given by 
\begin{align*}
	\mathcal{K}_x (\xi) \coloneqq \tr \bigg(\nabla \xi(x)^2 + A(x,\xi(x))\nabla \xi(x) + B(x,\xi(x))\bigg).
\end{align*}
In the computations below, time derivatives of $\xi$ will be treated by extending $\xi$ for a short time (in the sense of Proposition \ref{extend}).

\begin{lemma}
	\begin{align*}
		\tr \bigg(\nabla \xi(x)^2 + A(x,\xi(x))\nabla \xi(x) \bigg) &= L^{ik}\frac{\partial \xi_j}{\partial x_k}\frac{\partial^2 L}{\partial v_j \partial v_l}\frac{\partial \xi_l}{\partial x_i} 
		- L^{im}\frac{\partial^3 L}{\partial v_m \partial v_j \partial v_k}\xi_l \frac{\partial \xi_j}{\partial x_i}\frac{\partial \xi_k}{\partial x_l}\\
		&\quad + L^{im}\frac{\partial^3 L}{\partial v_m \partial v_j \partial v_k}L^{kl}\frac{\partial L}{\partial x_l}\frac{\partial \xi_j}{\partial x_i}\\
		&\quad - L^{ir}\frac{\partial^3 L}{\partial v_r \partial v_j \partial v_k}L^{kl}\frac{\partial^2 L}{\partial x_l \partial v_m}\frac{\partial \xi_j}{\partial x_i}\xi_m
	\end{align*}
\end{lemma}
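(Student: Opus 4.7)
The plan is a direct index-calculus computation, essentially a Bochner-type manipulation, using the potential assumption of Theorem~\ref{formula} crucially. I begin by unpacking $A(x,\xi(x))$ via its definition in Lemma~\ref{jacobi}: the chain rule gives
\begin{equation*}
\Bigl[\tfrac{d}{dt}(\nabla^2_{vv} L)(\gamma,\dot\gamma)\Bigr]_{kj}\bigg|_{t=0} = \frac{\partial^3 L}{\partial x_l \partial v_k \partial v_j}\xi_l + \frac{\partial^3 L}{\partial v_l \partial v_k \partial v_j}\ddot\gamma_l(0),
\end{equation*}
and the Euler--Lagrange equation yields $\ddot\gamma_l(0) = L^{lm}\bigl[\partial L/\partial x_m - \tfrac{\partial^2 L}{\partial x_p \partial v_m}\xi_p\bigr]$. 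Substituting, $\tr(A(x,\xi)\nabla\xi)$ splits naturally into five pieces $P_1,\ldots,P_5$: the ``position derivative'' piece $P_1$, the $\nabla_x L$-piece $P_2$ coming from $\ddot\gamma$, the $L_{xv}\xi$-piece $P_3$ coming from $\ddot\gamma$, and the antisymmetric $L_{vx} - L_{xv}$ contributions $P_4, P_5$. Purely symbolic relabeling together with symmetry of $\nabla^3_{vvv} L$ in its three $v$-indices immediately matches $P_2$ with the third term on the RHS of the lemma.

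\smallskip

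The potential assumption $\nabla u(x) = (\nabla_v L)(x, \xi(x))$ enters through the key algebraic identity
\begin{equation*}
L_{ik}\frac{\partial \xi_k}{\partial x_j} - L_{jk}\frac{\partial \xi_k}{\partial x_i} = (L_{xv})_{ij} - (L_{xv})_{ji},
\end{equation*}
obtained by differentiating $\partial u/\partial x_i = (\partial L/\partial v_i)(x,\xi(x))$ in $x_j$ and invoking Schwarz's theorem on $\nabla^2 u$. Inserting $\delta^a_c = L^{ae}L_{ec}$ into $\tr(\nabla\xi^2)$ and applying this identity to rewrite $L_{ec}\partial\xi_c/\partial x_b$ as $L_{cb}\partial\xi_c/\partial x_e + (L_{xv})_{ec} - (L_{xv})_{ce}$, the three resulting summands are precisely the first RHS term (after using symmetry of $\nabla^2_{vv} L$ in $j,l$) together with $-P_4$ and $-P_5$. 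Hence $\tr(\nabla\xi^2) + P_4 + P_5$ equals the first RHS term, and the proof reduces to showing $P_1 + P_3 = $ (second RHS term) $+$ (fourth RHS term).

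\smallskip

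For this last step I would first use the total-derivative identity $\partial_l[L_{kj}(x,\xi)] = \frac{\partial^3 L}{\partial x_l \partial v_k \partial v_j} + \frac{\partial^3 L}{\partial v_p \partial v_k \partial v_j}\frac{\partial \xi_p}{\partial x_l}$ and the $v$-symmetry of $\nabla^3_{vvv} L$ to write $P_1 = $ (second RHS term) $+\,R$, where $R := L^{ik}\xi_l\frac{\partial\xi_j}{\partial x_i}\partial_l[L_{kj}(x,\xi)]$ is a residual ``total-divergence-like'' expression. Applying the Step~2 symmetry to the factor $(L_{xv})_{ml}$ inside $P_3$ then rewrites it, with the help of $L^{kl}L_{pl} = \delta^k_p$ and the $v$-symmetry of $\nabla^3_{vvv} L$, as (fourth RHS term) plus correction terms that, via further applications of the potential identity and Schwarz's theorem on the mixed third-partials of $u$, combine with $R$ to vanish. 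The main obstacle is the intricate combinatorial bookkeeping in this last step: managing six contracted indices while sequencing the potential symmetry, Clairaut commutation, and third-derivative symmetries correctly. The structural reason the argument closes is that the potential identity produces exactly the antisymmetric $L_{xv}$ combination built into the definition of $A$, so all intermediate $\nabla^2 u$ and $\nabla^3 u$ contributions cancel by Schwarz's theorem and never appear in the final expression.
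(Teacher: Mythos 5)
Your handling of $\tr(\nabla\xi^2)$ together with the antisymmetric part of $A$ is exactly the paper's argument: differentiating $\partial_i u = (\partial L/\partial v_i)(x,\xi(x))$ and invoking symmetry of $\nabla^2 u$ to write $(\nabla^2_{vv}L)^{-1}\big[\nabla^2_{vx}L-\nabla^2_{xv}L\big] = (\nabla^2_{vv}L)^{-1}\nabla\xi^{\top}(\nabla^2_{vv}L) - \nabla\xi$, which absorbs $\tr(\nabla\xi^2)$ and produces the first right-hand term; likewise your $P_2$ is the third term. The divergence, and the gap, lies in how you read $\frac{d}{dt}(\nabla^2_{vv}L)$. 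You expand it as the full material derivative along $\gamma_{x,v}$, namely $\frac{\partial^3 L}{\partial x_l\partial v_k\partial v_j}\xi_l + \frac{\partial^3 L}{\partial v_l\partial v_k\partial v_j}\ddot\gamma_l(0)$ (the literal reading of Lemma \ref{jacobi}), whereas the paper's appendix follows its announced convention of extending $\xi$ in time (Proposition \ref{extend}) and differentiates $(\nabla^2_{vv}L)(x,V(t,x))$ at \emph{fixed} $x$, i.e.\ uses only $\frac{\partial^3 L}{\partial v_i\partial v_j\partial v_k}\dot\xi_k$ with $\dot\xi = \Gamma_L(\xi) = \ddot\gamma(0)-\nabla\xi\,\xi$. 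Under that convention the $-\nabla\xi\,\xi$ part of $\dot\xi$ yields the second right-hand term directly and the computation ends; your convective piece $P_1$ and residual $R$ never appear.

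Under your (material) reading the final cancellation you postulate is not an identity, so the proof cannot close. After your Step 2 the left side reduces to $P_1+P_3$, and $P_3$ is already the fourth right-hand term up to the transposition bookkeeping you mention, so you would need
\begin{equation*}
L^{ik}\Big(\frac{\partial^3 L}{\partial x_l\partial v_k\partial v_j} + \frac{\partial^3 L}{\partial v_m\partial v_k\partial v_j}\frac{\partial\xi_m}{\partial x_l}\Big)\xi_l\frac{\partial\xi_j}{\partial x_i}
= L^{ik}\,\partial_{x_l}\big[(\nabla^2_{vv}L)(x,\xi(x))\big]_{kj}\,\xi_l\,\frac{\partial\xi_j}{\partial x_i} = 0,
\end{equation*}
which neither the potential assumption nor Schwarz's theorem delivers. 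Concretely, take $d=1$ and $L(x,v)=\tfrac12 g(x)v^2$ (in one dimension every $\xi$ admits a potential): all $\partial^3 L/\partial v^3$ factors vanish, so $P_3=0$ and the stated right-hand side is just $(\xi')^2$, while the displayed quantity equals $\tfrac{g'}{g}\xi\xi'\neq 0$ and the material computation of $\tr(\nabla\xi^2+A\nabla\xi)$ gives $(\xi')^2+\tfrac{g'}{g}\xi\xi'$. So with your reading of $A$ the asserted formula itself differs by exactly this convective term; the stated lemma is the Eulerian (extension) version, and to prove it you must adopt the convention of Proposition \ref{extend} and Definition \ref{gamma} for the time derivative, at which point the argument collapses to the paper's short computation. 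It is worth noting that your interpretation is the literal one in Lemma \ref{jacobi}, so the discrepancy you ran into reflects a genuine mismatch of conventions between that definition and the appendix, not mere bookkeeping on your side--but as a proof of the stated identity, the cancellation step fails.
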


\begin{proof}
	Recall that
	\begin{align*}
		A(x,v) &= (\nabla_{vv}^2 L)(x,v)^{-1}\bigg[ \frac{d}{dt}\big((\nabla_{vv}^2 L)(\gamma_{x,v}(t),\dot{\gamma}_{x,v}(t))\big)\bigg|_{t=0} + (\nabla_{vx}^2 L)(x,v) - (\nabla_{xv}^2 L)(x,v) \bigg].
	\end{align*}
	By assumption, there exists a potential $u(x)$ satisfying
	\begin{align*}
		\nabla u(x) = (\nabla_v L)(x,\xi).
	\end{align*}
	Since the Hessian
	\begin{align*}
		\nabla^2 u(x) = (\nabla_{vx}^2L)(x,\xi) + (\nabla_{vv}^2L)(x,\xi)\nabla\xi 
	\end{align*}
	is symmetric, we have
	\begin{align*}
		(\nabla_{vx}^2L)(x,\xi) + (\nabla_{vv}^2L)(x,\xi)\nabla\xi &= (\nabla_{xv}^2L)(x,\xi) + \nabla\xi^\top (\nabla_{vv}^2L)(x,\xi)\\
		\implies (\nabla_{vv}^2 L)(x,\xi)^{-1}\bigg[(\nabla_{vx}^2 L)(x,\xi) - (\nabla_{xv}^2 L)(x,\xi) \bigg] &= (\nabla_{vv}^2 L)(x,\xi)^{-1}\nabla\xi^\top (\nabla_{vv}^2L)(x,\xi) - \nabla\xi
	\end{align*}
	Next, 
	\begin{align*}
		\frac{d}{dt}\big((\nabla_{vv}^2 L)(x,\xi)\big)_{ij} &= \langle \bigg(\nabla_v\frac{\partial^2 L}{\partial v_i \partial v_j}\bigg)(x,\xi) , \dot{\xi} \rangle\\
		&= \langle \bigg(\nabla_v\frac{\partial^2 L}{\partial v_i \partial v_j}\bigg)(x,\xi) , -\nabla\xi\xi + (\nabla_{vv}^2 L)(x,\xi)^{-1}\big[(\nabla_x L)(x,\xi) - (\nabla_{xv}^2 L)(x,\xi)\xi \big] \rangle\\
		&= -\frac{\partial^3 L}{\partial v_i \partial v_j \partial v_k} \frac{\partial \xi_k}{\partial x_l}\xi_l + \frac{\partial^3 L}{\partial v_i \partial v_j \partial v_k} L^{kl}\frac{\partial L}{\partial x_l} - \frac{\partial^3 L}{\partial v_i \partial v_j \partial v_k} L^{kl}\frac{\partial^2 L}{\partial x_l \partial v_m}\xi_m
	\end{align*}
	
\end{proof}

\begin{lemma}
	\begin{align*}
		\tr\bigg(B(x,\xi(x))\bigg) &= - L^{kl}\frac{\partial^3 L}{\partial x_k \partial v_j \partial v_l}\frac{\partial \xi_j}{\partial x_i}\xi_i
		+ L^{ij}\frac{\partial^3 L}{\partial x_i \partial v_j \partial v_k}L^{kl}\frac{\partial L}{\partial x_l}\\
		&\quad - L^{ij}\frac{\partial^3 L}{\partial x_i \partial v_j \partial v_k}L^{kl}\frac{\partial^2 L}{\partial x_l \partial v_m}\xi_m - L^{ij}\frac{\partial^2 L}{\partial x_j \partial x_i}
	\end{align*}
\end{lemma}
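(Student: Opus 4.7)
The plan is to compute $\tr(B(x,\xi(x)))$ directly from its definition, following the scheme used for $A$ in the preceding lemma. Splitting $B$ according to the two terms in its defining bracket, the trace of the $-(\nabla_{vv}^2L)^{-1}(\nabla_{xx}^2L)$ piece immediately yields $-L^{ij}\frac{\partial^2 L}{\partial x_j\partial x_i}$, matching the fourth term of the claimed formula.

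For the other piece, I would expand $\frac{d}{dt}\big[(\nabla_{vx}^2L)(\gamma,\dot\gamma)\big]\big|_{t=0}$ by the chain rule, producing two contributions: one proportional to $\dot\gamma(0)=\xi(x)$ and involving $\frac{\partial^3L}{\partial x_c\partial v_a\partial x_i}$, and one proportional to $\ddot\gamma(0)$ and involving $\frac{\partial^3L}{\partial v_c\partial v_a\partial x_i}$. Substituting $\ddot\gamma(0) = (\nabla_{vv}^2L)^{-1}\big[(\nabla_xL)(x,\xi) - (\nabla_{vx}^2L)(x,\xi)\xi\big]$---derived from the Euler-Lagrange equation exactly as in the previous lemma's proof---and then using Schwarz to reorder the third-order partials, one directly obtains the second and third terms of the statement.

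The outstanding contribution $L^{ij}\xi_c\frac{\partial^3L}{\partial x_c\partial v_j\partial x_i}$ has a $\partial_x\partial_v\partial_x L$ structure and does not match the $\partial_x\partial_v\partial_v L$ form of the lemma's first term. To reshape it, I would invoke the potential hypothesis $\nabla u(x) = (\nabla_vL)(x,\xi(x))$: differentiating the consequence $\frac{\partial^2 L}{\partial x_c\partial v_a}(x,\xi) = \frac{\partial^2 u}{\partial x_c\partial x_a}(x) - \frac{\partial^2L}{\partial v_k\partial v_a}(x,\xi)\frac{\partial \xi_k}{\partial x_c}$ once more with respect to $x_i$ and contracting with $L^{ia}\xi_c$, one obtains an identity that trades the outstanding contraction for a $\partial_x\partial_v\partial_v L$ contraction against $\nabla\xi$---the desired first term---up to auxiliary terms involving $\nabla^3 u$, $\partial_v\partial_v\partial_v L$, and $\nabla^2\xi$. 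The main obstacle is verifying that these auxiliary terms vanish: this is where the total symmetry of $\nabla^3 u$ becomes decisive, since combining the identity with its counterparts obtained by permuting the roles of the three $x$-indices on $u$ should force the unwanted contractions to cancel. Carrying out this cancellation cleanly is the delicate algebraic point of the proof.
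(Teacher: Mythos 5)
Your proposal takes a genuinely different route from the paper's, and in two places it has gaps that the paper's route avoids.

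The paper's Appendix does not expand the time derivative of $(\nabla_{vx}^2L)$ along $\gamma$ with a position slot and a velocity slot. Instead, it writes the entire time derivative as the single pairing $\langle(\nabla_v \partial^2_{v_j x_i} L)(x,\xi),\dot\xi\rangle$, where $\dot\xi$ denotes $\Gamma_L(\xi)=\dot V(0,\cdot)$, the time-partial of the extended field at \emph{fixed} $x$, and then substitutes $\dot\xi=(\nabla^2_{vv}L)^{-1}[\nabla_xL-(\nabla^2_{xv}L)\xi]-\nabla\xi\,\xi$. The $-\nabla\xi\,\xi$ piece of $\dot\xi$ is precisely what produces the lemma's first term $-L^{kl}\frac{\partial^3L}{\partial x_k\partial v_j\partial v_l}\frac{\partial\xi_j}{\partial x_i}\xi_i$. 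No $\partial_x\partial_v\partial_x$ contraction ever appears, so there is nothing to convert. (Whether this identification of $\frac{d}{dt}(\nabla^2_{vx}L)(\gamma,\dot\gamma)$ with $\langle\nabla_v\cdot,\dot\xi\rangle$ is itself justified is a separate question, but that is the paper's argument.) You instead use $\ddot\gamma(0)$ and get the extra $L^{ij}\xi_c\frac{\partial^3L}{\partial x_c\partial v_j\partial x_i}$ term, which the paper never encounters.

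The concrete gaps in your plan are these. First, your step ``one directly obtains the second and third terms'' is not right as stated: substituting $\ddot\gamma_k=L^{kl}\big[\frac{\partial L}{\partial x_l}-\frac{\partial^2L}{\partial v_l\partial x_m}\xi_m\big]$ gives for the third piece $-L^{ij}\frac{\partial^3L}{\partial x_i\partial v_j\partial v_k}L^{kl}\frac{\partial^2L}{\partial v_l\partial x_m}\xi_m$, in which $L^{kl}$ contracts a $v$-index and $\xi$ contracts an $x$-index; the lemma's third term is $-L^{ij}\frac{\partial^3L}{\partial x_i\partial v_j\partial v_k}L^{kl}\frac{\partial^2L}{\partial x_l\partial v_m}\xi_m$, with the roles swapped. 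Schwarz does not equate these, so Schwarz reordering does not close this step. Second, differentiating the symmetry identity $\frac{\partial^2L}{\partial x_c\partial v_a}(x,\xi)=\frac{\partial^2u}{\partial x_c\partial x_a}-\frac{\partial^2L}{\partial v_k\partial v_a}\frac{\partial\xi_k}{\partial x_c}$ in $x_i$ and contracting with $L^{ia}\xi_c$ produces the desired $\partial_x\partial_v\partial_v L\cdot\nabla\xi$ contraction \emph{twice} (once from $\frac{\partial^3L}{\partial x_i\partial v_k\partial v_a}\frac{\partial\xi_k}{\partial x_c}$ and once from $\frac{\partial^3L}{\partial v_m\partial x_c\partial v_a}\frac{\partial\xi_m}{\partial x_i}$, the two being equal after Schwarz), together with a $\nabla^3u$ contraction, a $\partial^3_vL(\nabla\xi)^2$ contraction, and a genuine second-order term $-\xi_c\frac{\partial^2\xi_i}{\partial x_i\partial x_c}=-\langle\xi,\nabla(\nabla\cdot\xi)\rangle$. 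The last of these is \emph{not} expressible through $\xi,\nabla\xi$, so it cannot cancel within $\tr(B)$ alone and would have to be tracked against the other pieces of $\mathcal{K}_x$. The ``total symmetry of $\nabla^3u$'' does not make these auxiliary contractions vanish, so the cancellation you hope for is a genuine open gap in the proposal rather than a routine algebraic exercise. To reach the paper's lemma you need the paper's shortcut through $\dot\xi$, not a literal expansion of $\frac{d}{dt}(\nabla^2_{vx}L)(\gamma,\dot\gamma)$ followed by a potential-hypothesis conversion.
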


\begin{proof}
	Recall that
	\begin{align*}
		B(x,v) &= (\nabla_{vv}^2 L)(x,v)^{-1}\bigg[  \frac{d}{dt}\big((\nabla_{vx}^2 L)(\gamma_{x,v}(t),\dot{\gamma}_{x,v}(t))\big)\bigg|_{t=0} - (\nabla_{xx}^2 L)(x,v) \bigg].
	\end{align*}
	By a similar computation as the previous lemma, we have
	\begin{align*}
		\frac{d}{dt}\big((\nabla_{vx}^2 L)(x,\xi)\big)_{ij} &= \langle \bigg(\nabla_v\frac{\partial^2 L}{\partial v_i \partial x_j}\bigg)(x,\xi) , \dot{\xi} \rangle\\
		&= \langle \bigg(\nabla_v\frac{\partial^2 L}{\partial v_i \partial x_j}\bigg)(x,\xi) , -\nabla\xi\xi + (\nabla_{vv}^2 L)(x,\xi)^{-1}\big[(\nabla_x L)(x,\xi) - (\nabla_{xv}^2 L)(x,\xi)\xi \big] \rangle\\
	\end{align*}
\end{proof}

Putting together these two lemmas, we get the formula for $\mathcal{K}_x(\xi)$.

\bibliographystyle{amsplain}
\bibliography{references.bib}

\end{document}